\documentclass[a4paper,DIV=11,abstracton]{scrartcl}

\usepackage{color}
\usepackage{float}
\usepackage{mathtools}
\usepackage{amsmath}
\usepackage{amsthm}
\usepackage{amssymb}
\usepackage{graphicx}
\usepackage{esint}

\usepackage
[
    unicode=true,
    bookmarks=false,
    breaklinks=false,
    pdfborder={0 0 1},
    backref=false,colorlinks=true
]
{hyperref}

\hypersetup{
    pdffitwindow=false,
    pdfpagelayout=SinglePage,
    linkcolor=blue,
    urlcolor=blue!30!black,
    citecolor=green!50!black
}

\makeatletter

\pdfpageheight\paperheight
\pdfpagewidth\paperwidth

\floatstyle{ruled}
\newfloat{algorithm}{tbp}{loa}
\providecommand{\algorithmname}{Algorithm}
\floatname{algorithm}{\protect\algorithmname}

\theoremstyle{plain}
\newtheorem{thm}{\protect\theoremname}[section]
\theoremstyle{definition}
\newtheorem{example}[thm]{\protect\examplename}
\theoremstyle{definition}
\newtheorem{defn}[thm]{\protect\definitionname}
\theoremstyle{definition}
\newtheorem{rem}[thm]{\protect\remarkname}
\theoremstyle{plain}
\newtheorem{prop}[thm]{\protect\propositionname}

\usepackage{graphicx}
\usepackage{dsfont}
\usepackage{xcolor}
\usepackage{algorithm}
\usepackage{algpseudocode}
\usepackage{authblk}
\usepackage{enumitem}
\usepackage[bf]{caption}
\usepackage[T1]{fontenc}


\DeclareMathAlphabet{\mathpzc}{OT1}{pzc}{m}{it}

\setcapindent{0pt}

\setlist[enumerate]{leftmargin=0pt,itemindent=1.5em}

\allowdisplaybreaks


\newcommand{\subfiguretitle}[1]{{\scriptsize{#1}} \\[1mm]}
\newcommand{\tsub}[1]{\text{\tiny{#1}}}                         
\providecommand{\norm}[1]{\left\lVert #1 \right\rVert}          

\newcommand{\xqed}[1]{\leavevmode\unskip\penalty9999 \hbox{}\nobreak\hfill \quad\hbox{#1}}
\newcommand{\exampleSymbol}{\xqed{$\triangle$}}

\renewcommand*{\env@matrix}[1][*\c@MaxMatrixCols c]{%
  \hskip -\arraycolsep
  \let\@ifnextchar\new@ifnextchar
  \array{#1}}

\mathtoolsset{centercolon} 

\author[1]{Stefan Klus}
\author[1]{Feliks N\"uske}
\author[1]{P\'eter Koltai}
\author[1]{Hao Wu}
\author[2,3]{Ioannis Kevrekidis}
\author[1,3]{Christof Sch\"utte}
\author[1]{Frank No\'e}
\affil[1]{\normalsize Department of Mathematics and Computer Science, Freie Universit\"at Berlin, Germany}
\affil[2]{\normalsize Department of Chemical and Biological Engineering \& Program in Applied and Computational Mathematics, Princeton University, USA}
\affil[3]{\normalsize Zuse Institute Berlin, Germany}
\date{}


\makeatother

\providecommand{\definitionname}{Definition}
\providecommand{\examplename}{Example}
\providecommand{\propositionname}{Proposition}
\providecommand{\remarkname}{Remark}
\providecommand{\theoremname}{Theorem}

\begin{document}

\title{Data-driven model reduction and\\transfer operator approximation}
\maketitle

\begin{abstract}
In this review paper, we will present different data-driven dimension reduction techniques for dynamical systems that are based on transfer operator theory as well as methods to approximate transfer operators and their eigenvalues, eigenfunctions, and eigenmodes. The goal is to point out similarities and differences between methods developed independently by the dynamical systems, fluid dynamics, and molecular dynamics communities such as \emph{time-lagged independent component analysis} (TICA), \emph{dynamic mode decomposition} (DMD), and their respective generalizations. As a result, extensions and best practices developed for one particular method can be carried over to other related methods.
\end{abstract}

\section{Introduction}

The numerical solution of complex systems of differential equations plays an important role in many areas such as molecular dynamics, fluid dynamics, mechanical as well as electrical engineering, and physics. These systems often exhibit multi-scale behavior which can be due to the coupling of subsystems with different time scales \textendash{} for instance, fast electrical and slow mechanical components \textendash{} or due to the intrinsic properties of the system itself \textendash{} for instance, the fast vibrations and slow conformational changes of molecules. Analyzing such problems using transfer operator based methods is often infeasible or prohibitively expensive from a computational point of view due to the so-called \emph{curse of dimensionality}. One possibility to avoid this is to project the dynamics of the high-dimensional system onto a lower-dimensional space and to then analyze the reduced system representing, for instance, only the relevant slow dynamics, see, e.g., \cite{PPGDN13,FGH14a}. 

In this paper, we will introduce different methods such as \emph{time-lagged independent component analysis} (TICA) \cite{MS94,PPGDN13,SP13} and \emph{dynamic mode decomposition} (DMD) \cite{Schmid10,CTR12,TRLBK14,KBBP16} to identify the dominant dynamics using only simulation data or experimental data. It was shown that these methods are related to Koopman operator approximation techniques \cite{Ko31,Mezic05,RMBSH09,BMM12}. Extensions of the aforementioned methods called the \emph{variational approach of conformation dynamics} (VAC) \cite{NoNu13,NKPMN14,NSVN15} developed mainly for reversible molecular dynamics problems and \emph{extended dynamic mode decomposition} (EDMD) \cite{WKR15,WRK15,KKS16} can be used to compute eigenfunctions, eigenvalues, and eigenmodes of the Koopman operator (and its adjoint, the Perron\textendash Frobenius operator). Interestingly, although the underlying ideas, derivations, and intended applications of these methods differ, the resulting algorithms share a lot of similarities. The goal of this paper is to show the equivalence of different data-driven methods which have been widely used by the dynamical systems, fluid dynamics, and molecular dynamics communities, but under different names. Hence, extensions, generalizations, and algorithms developed for one method can be carried over to its counterparts, resulting in a unified theory and set of tools. An alternative approach to data-driven model reduction \textendash{} also related to transfer operators and their generators \textendash{} would be to use \emph{diffusion maps}
\cite{NLCK06,CKLMN08,FPKD11,Gia15}. Manifold learning methods, however, are beyond the scope of this paper.

The outline of the paper is as follows: Section~\ref{sec:Transfer operators and reversibility} briefly introduces transfer operators and the concept of reversibility. In Section~\ref{sec:Numerical approximation of transfer operators}, different data-driven methods for the approximation of the eigenvalues, eigenfunctions, and eigenmodes of transfer operators will be described. The theoretical background and the derivation of these methods will
be outlined in Section~\ref{sec:Derivation}. Section~\ref{sec:Conclusion} addresses open problems and lists possible future work. 

\section{Transfer operators and reversibility\label{sec:Transfer operators and reversibility}}

In the literature, the term \emph{transfer operator} is sometimes used in different contexts. In this section, we will briefly introduce the Perron\textendash Frobenius operator, the Perron\textendash Frobenius operator with respect to the equilibrium density, and the Koopman operator. All these three operators are, according to our definition, transfer operators.

\subsection{Guiding example}

Our paper will deal with data-driven methods to analyze both stochastic and deterministic dynamical systems. To illustrate the concepts of transfer operators and their spectral components, we first introduce a simple stochastic dynamical system that will be revisited throughout the paper.

\begin{example}
\label{ex:OU process}Consider the following one-dimensional Ornstein\textendash Uhlenbeck process, given by an Itô stochastic differential equation\footnote{A general time-homogeneous Itô stochastic differential equation is given by $\mathrm{d}\mathbf{X}_{t}=-\alpha(\mathbf{X}_{t})\,\mathbf{X}_{t}\,\mathrm{d}t+\sigma(\mathbf{X}_{t})\,\mathrm{d}\mathbf{W}_{t}$, where $\alpha:\mathbb{R}^{d}\to\mathbb{R}^{d}$ and $\sigma:\mathbb{R}^{d}\to\mathbb{R}^{d\times d}$ are coefficient functions, and $\{\mathbf{W}_{t}\}_{t\ge0}$ is a $d$-dimensional standard Wiener process.} of the form:
\begin{equation*}
    \mathrm{d}\boldsymbol{X}_{t}=-\alpha D\,\boldsymbol{X}_{t}\,\mathrm{d}t+\sqrt{2D}\,\mathrm{d}\boldsymbol{W}_{t}.
\end{equation*}
Here, $\{W_{t}\}_{t\ge0}$ is a one-dimensional standard Wiener process (Brownian motion), the parameter $\alpha$ is the friction coefficient, and $D=\beta^{-1}$ is the diffusion coefficient. The stochastic forcing
usually models physical effects, most often thermal fluctuations and it is customary to call $\beta$ the inverse temperature.

The transition density of the Ornstein\textendash Uhlenbeck process, i.e., the conditional probability density to find the process near $y$ a time $\tau$ after it had been at $x$, is given by
\begin{equation}
    p_{\tau}(x,y)=\frac{1}{\sqrt{2\,\pi\,\sigma^{2}(\tau)}}\exp\left(-\frac{\left(y-x\,e^{-\alpha D\tau}\right)^{2}}{2\,\sigma^{2}(\tau)}\right),\label{eq:transition_density_OU}
\end{equation}
where $\sigma^{2}(\tau)=\alpha^{-1}\left(1-e^{-2\alpha D\tau}\right)$. Figure \ref{fig:Density OU}a shows the transition densities for different values of $\tau$. More details can be found in~\cite{Pav14}. For complex dynamical systems, the transition density is not known explicitly, but must be estimated from simulation or measurement data. 

\begin{figure}[tbh]
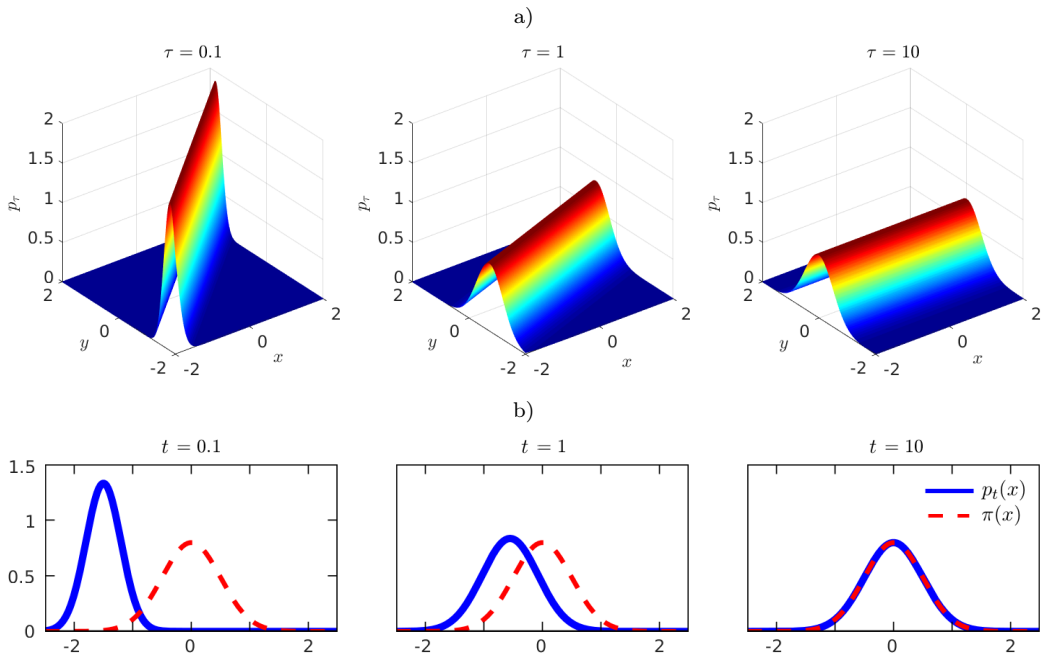

    \centering
    \subfiguretitle{a)}
    \includegraphics[width=0.9\columnwidth]{pics/DensityOU} \\
    \subfiguretitle{b)}
    \includegraphics[width=0.9\columnwidth]{pics/1D_Evolution}
    \caption{a) Transition density function of the Ornstein\textendash Uhlenbeck process for different values of $\tau$. If $\tau$ is small, values starting in $x$ will stay close to it. For larger values of $\tau$, the influence of the starting point $x$ is negligible. Densities converge to the equilibrium density, denoted by $\pi$. Here, $\alpha=4$ and $D=0.25$. b) Evolution of the probability to find the dynamical system at any point $x$ over time $t$, after starting with a peaked distribution at $t=0$. We show the resulting distributions at times $t=0.1$, and $t=1$, and $t=10$. The system relaxes towards the stationary density $\pi(x)$.}
    \label{fig:Density OU}
\end{figure}

In this work we will describe the dynamics of a system in terms of dynamical operators such as the propagator $\mathcal{P}_{\tau}$, which is defined by the transition density $p_{\tau}(x,y)$ and propagates a probability density of Brownian walkers in time by
\begin{equation*}
    p_{t+\tau}(x)=\mathcal{P}_{\tau}\,p_{t}(x).
\end{equation*}
See Figure~\ref{fig:Density OU}b for the time evolution of the Ornstein\textendash Uhlenbeck process initiated from a localized starting condition. It can be seen that the distribution spreads out and converges towards a Gaussian distribution, which is then invariant in time. For this simple dynamical system we can give the equation for the invariant density explicitly:
\begin{equation} \label{eq:stationary_density_OU}
    \pi(x)=\frac{1}{\sqrt{2\,\pi\,\alpha^{-1}}}\exp\left(-\frac{x^{2}}{2\,\alpha^{-1}}\right),
\end{equation}
which is a Gaussian whose variance is decreasing with increasing friction and decreasing temperature. \exampleSymbol
\end{example}

\subsection{Transfer operators}

Let $\{\mathbf{X}_{t}\}_{t\ge0}$ be a time-homogeneous\footnote{We call a stochastic process $\{\mathbf{X}_{t}\}_{t\ge0}$ \emph{time-homogeneous}, or \emph{autonomous}, if it holds for every $t\ge s\ge0$ that the distribution of $\mathbf{{X}}_{t}$ conditional to $\mathbf{X}_{s}=x$ only depends on $x$ and $(t-s)$. It is the stochastic analogue of the flow of an autonomous (time-independent) ordinary differential equation.} stochastic process defined on the bounded state space $\mathbb{X}\subset\mathbb{R}^{d}$. It can be genuinely stochastic or it might as well be deterministic, such that there is a flow map $\Phi_{\tau}:\mathbb{X}\to\mathbb{X}$ with $\Phi_{\tau}(\mathbf{X}_{t})=\mathbf{X}_{t+\tau}$ for $\tau\ge0$. Let the measure\footnote{For a measure-theoretic discussion of this construction, please refer to \cite{KKS16}. For our purposes, it is sufficient to equip $\mathbb{X}$ with the standard Lebesgue measure. In particular, if not stated otherwise, measurability of a set $\mathbb{A\subset X}$ is meant with respect to the Borel $\sigma$-algebra.} $\mathbb{\mathbb{P}}$ denote the law of the process $\{\mathbf{X}_{t}\}_{t\ge0}$ that we will study in terms of its statistical transition properties. To this end, under some mild regularity assumptions\footnote{These conditions are called interchangeably \emph{absolute continuity},$\mu$-\emph{compatibility}, or \emph{null preservingness}.} which are satisfied by Itô diffusions with smooth coefficients \cite{Hopf54,Kre85}, we can give the following definition.

\begin{defn}
The transition density function $p_{\tau}:\mathbb{X}\times\mathbb{X}\to[0,\,\infty]$
of a process $\{\mathbf{X}_{t}\}_{t\ge0}$ is defined by
\begin{equation*}
    \mathbb{P}[\mathbf{X}_{t+\tau}\in\mathbb{A}\,\vert\,\mathbf{X}_{t}=x]=\intop_{\mathbb{A}}p_{\tau}(x,y)\,\mathrm{d}y,
\end{equation*}
for every measurable set $\mathbb{A}.$ Here and in what follows, $\mathbb{P}[\,\cdot\,\vert\,\mathfrak{E}]$ denotes probabilities conditioned on the event $\mathfrak{E}$. That is, $p_{\tau}(x,y)$ is the conditional probability density of $\mathbf{X}_{t+\tau}=y$ given that $\mathbf{X}_{t}=x$.
\end{defn}

For $1\le r\le\infty$, the spaces $L^{r}(\mathbb{X}$) denote the usual spaces of $r$-Lebesgue integrable functions, which is a Banach space with the corresponding norm $\|\cdot\|_{L^{r}}$.

\begin{defn}
Let $p_{t}\in L^{1}(\mathbb{X})$ be the probability density and $f_{t}\in L^{\infty}(\mathbb{X})$
an observable of the system. For a given lag time $\tau$: 
\begin{enumerate}
\item[a)] \setlength\belowdisplayskip{0pt}The \emph{Perron\textendash Frobenius
operator} or \emph{propagator} $\mathcal{P}_{\tau}:L^{1}(\mathbb{X})\to L^{1}(\mathbb{X})$
is defined by 
\begin{equation*}
    \mathcal{P}_{\tau}p_{t}(x)=\intop_{\mathbb{X}}p_{\tau}(y,x)\,p_{t}(y)\,\mathrm{d}y.
\end{equation*}
\item[b)] The \emph{Koopman operator} $\mathcal{K}_{\tau}:L^{\infty}(\mathbb{X})\to L^{\infty}(\mathbb{X})$
is defined by
\begin{equation*}
    \mathcal{K}_{\tau}f_{t}(x)=\intop_{\mathbb{X}}p_{\tau}(x,y)\,f_{t}(y)\,\mathrm{d}y=\mathbb{E}[f_{t}(\mathbf{X}_{t+\tau})\,\vert\,\mathbf{X}_{t}=x].
\end{equation*}
\end{enumerate}
\end{defn}

Both $\mathcal{P}_{\tau}$ and $\mathcal{K}_{\tau}$ are linear but infinite-dimensional operators which are adjoint to each other with respect to the standard duality pairing $\langle\cdot,\,\cdot\rangle$, defined by $\langle f,\,g\rangle=\int_{\mathbb{X}}f(x)\,g(x)\,dx$. The homogeneity of the stochastic process $\{\mathbf{X}_{t}\}_{t\ge0}$ implies the so-called \emph{semigroup property} of the operators,
i.e., $\mathcal{P}_{\tau+\sigma}=\mathcal{P_{\tau}}\mathcal{P}_{\sigma}$ and $\mathcal{K}_{\tau+\sigma}=\mathcal{K_{\tau}}\mathcal{K_{\sigma}}$ for $\tau,\sigma\ge0$. In other words, these operators describe time-stationary Markovian dynamics. While the Perron\textendash Frobenius operator describes the evolution of densities, the Koopman operator describes the evolution of observables. For the analysis of the long-term behavior of dynamical systems, densities that remain unchanged by the dynamics
play an important role (one can think of the concept of \textit{ergodicity}).

\begin{defn}
A density $\pi$ is called an \emph{invariant density} or \emph{equilibrium density} if $\mathcal{P}_{\tau}\,\pi=\pi$. That is, the equilibrium density $\pi$ is an eigenfunction of the Perron\textendash Frobenius
operator $\mathcal{P}_{\tau}$ with corresponding eigenvalue $1$.
\end{defn}

In what follows, $L_{\pi}^{r}(\mathbb{X})$ denotes the weighted $L^{r}$-space of functions $f$ such that $\|f\|_{L_{\pi}^{r}}:=\int_{\mathbb{X}}|f(x)|^{r}\pi(x)\,dx<\infty$. While one can consider the evolution of densities with respect to any density, we are particularly interested in the evolution with respect to the equilibrium density. From this point on, we assume there is a unique invariant density. This assumption is typically satisfied for molecular dynamics applications, where the invariant density is given by the Boltzmann distribution.

\begin{defn}
Let $L_{\pi}^{1}(\mathbb{X})\ni u_{t}(x)=\pi(x)^{-1}\,p_{t}(x)$ be a probability density with respect to the equilibrium density $\pi$. Then the \emph{Perron\textendash Frobenius operator (propagator) with
respect to the equilibrium density}, denoted by $ \mathcal{T}_\tau $, is defined by
\begin{equation*}
    \mathcal{T}_{\tau}u_{t}(x) = \intop_{\mathbb{X}}\frac{\pi(y)}{\pi(x)}\,p_{\tau}(y,x)\,u_{t}(y)\,\mathrm{d}y.
\end{equation*}
\end{defn}

The operators $\mathcal{P}_{\tau}$ and $\mathcal{K}_{\tau}$ can be defined on other spaces~$ L^r $ and $ L^{r'} $, with~$ r \ne 1 $ and~$ r' \ne \infty $, see \cite{BaRo95, KKS16} for more details. By defining the weighted duality pairing $\langle f,g\rangle_{\pi}=\int_{\mathbb{X}}f(x)\,g(x)\,\pi(x)\,dx$ for $f\in L_{\pi}^{r}(\mathbb{X})$ and $g\in L_{\pi}^{r'}(\mathbb{X})$, where $\frac{1}{r}+\frac{1}{r'}=1$, $\mathcal{T}_{\tau}$ defined on $L_{\pi}^{r'}(\mathbb{X})$ is the adjoint of $\mathcal{K}_{\tau}$ defined on $L_{\pi}^{r}(\mathbb{X})$ with respect to $\langle\cdot,\,\cdot\rangle_{\pi}$:
\begin{equation*}
    \langle\mathcal{K}_{\tau}f,\,g\rangle_{\pi} = \langle f,\,\mathcal{T}_{\tau}g\rangle_{\pi}.
\end{equation*}
For more details, see~\cite{LaMa94,SFHD99,FrJuKo13,NoNu13,NKPMN14,KKS16,WNPKKN16}. The two operators $\mathcal{P}_{\tau}$ and $\mathcal{T}_{\tau}$ are often referred to as \emph{forward operators}, whereas $\mathcal{K}_{\tau}$ is also called \emph{backward operator}, as they are the solution operators of the forward (Fokker\textendash Planck) and backward Kolmogorov equations \cite[Section 11]{LaMa94}, respectively.

\subsection{Spectral decomposition of transfer operators}

In what follows, let $\mathcal{A}_{\tau}$ denote one of the transfer operators defined above, i.e., $\mathcal{P}_{\tau}$, $\mathcal{T}_{\tau}$, or $\mathcal{K}_{\tau}$. We are particularly interested in computing eigenvalues $\lambda_{\ell}(\tau) \in \mathbb{C}$ and eigenfunctions $\varphi_{\ell}:\mathbb{X}\to\mathbb{C}$ of transfer operators, i.e.:
\begin{equation*}
    \mathcal{A}_{\tau}\varphi_{\ell} = \lambda_{\ell}(\tau) \,\varphi_{\ell}.
\end{equation*}
Note that the eigenvalues depend on the lag time $ \tau $. For the sake of simplicity, we will often omit this dependency. The eigenvalues and eigenfunctions of transfer operators contain important information about the global properties of the system such as metastable sets or fast and slow processes and can also be used as reduced coordinates, see~\cite{DJ99,SFHD99,Mezic05,SS13,FrJuKo13,FGH14a} and references therein. 

\begin{example}
\label{ex:OU eigenvalues}The eigenvalues $\lambda_{\ell}$ and eigenfunctions
$\varphi_{\ell}$ of $\mathcal{K_{\tau}}$ associated with the Ornstein--Uhlenbeck process introduced in Example~\ref{ex:OU process} are given by 
\begin{equation*}
    \lambda_{\ell}(\tau)=e^{-\alpha D\,(\ell-1)\,\tau},\quad\varphi_{\ell}(x)=\frac{1}{\sqrt{(\ell-1)!}}\,H_{\ell-1}\left(\sqrt{\alpha}\,x\right),\quad\ell=1,2,\dots,
\end{equation*}
where $H_{\ell}$ denotes the $\ell$th probabilists' Hermite polynomial \cite{Pav14}. The eigenfunctions of $\mathcal{P}_{\tau}$ are given by the eigenfunctions of $\mathcal{K}_{\tau}$ multiplied by the equilibrium density $\pi$, see also Figure~\ref{fig:Eigenfunctions OU}. \exampleSymbol

\begin{figure}
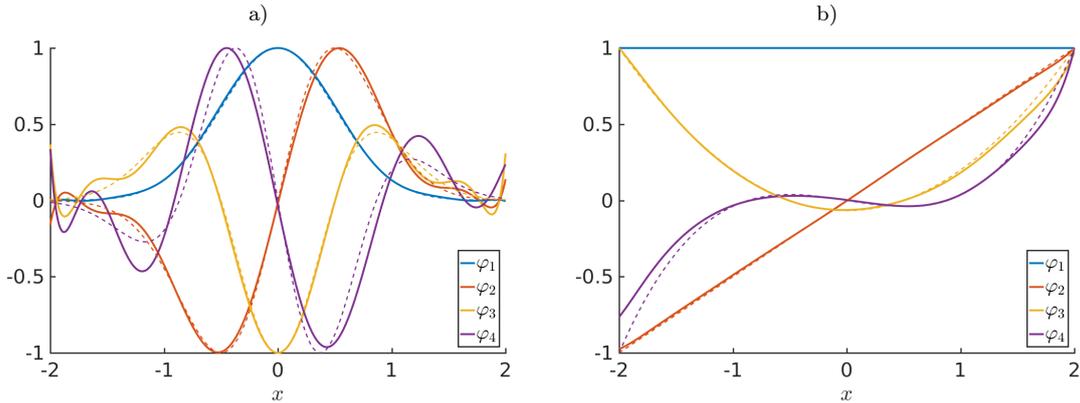

    \centering
    \begin{minipage}[t]{0.49\columnwidth}%
        \centering
        \subfiguretitle{a)}\includegraphics[width=0.9\textwidth]{pics/PerronFrobeniusOU}
    \end{minipage}%
    \begin{minipage}[t]{0.49\columnwidth}%
        \centering
        \subfiguretitle{b)}\includegraphics[width=0.9\textwidth]{pics/KoopmanOU}
    \end{minipage}
    \caption{Dominant eigenfunctions of the Ornstein\textendash Uhlenbeck process computed analytically (dotted lines) and using VAC/EDMD (solid lines). a) Eigenfunctions of the Perron\textendash Frobenius operator $\mathcal{P}_{\tau}$. b) Eigenfunctions of the Koopman operator $\mathcal{K_{\tau}}$.}
    \label{fig:Eigenfunctions OU}
\end{figure}
\end{example}

In addition to the eigenvalues and eigenfunctions, an essential part of the Koopman operator analysis is the set of \emph{Koopman modes} for the so-called \emph{full-state observable }$g(x)=x$. The Koopman modes are vectors that, together with the eigenvalues and eigenfunctions, allow us to reconstruct and to propagate the system's state~\cite{WKR15}. More precisely, assume that each component $g_{i}$ of the full-state observable, i.e., $g_{i}(x)=x_{i}$ for $i=1,\,\dots,\,d$, can be written in terms of the eigenfunctions as $g_{i}(x)=\sum_{\ell}\varphi_{\ell}(x)\,\eta_{i\ell}$. Defining the Koopman modes by $\eta_{\ell}=[\eta_{1\ell},\dots,\,\eta_{d\ell}]^{T}$, we obtain $g(x) = x=\sum_{\ell}\varphi_{\ell}(x)\,\eta_{\ell}$ and thus 
\begin{equation} \label{eq:Koopman reconstruction}
    \mathcal{K}_{\tau}g(x)
        = \mathbb{E}[g(\mathbf{X}_{\tau})\,\vert\,\mathbf{X}_{0}=x]
        = \mathbb{E}[\mathbf{X}_{\tau}\,\vert\,\mathbf{X}_{0}=x]
        = \sum_{\ell}\lambda_{\ell}(\tau)\,\varphi_{\ell}(x)\,\eta_{\ell}.
\end{equation}
For vector-valued functions, the Koopman operator is defined to act componentwise. In order to be able to compute eigenvalues, eigenfunctions, and eigenmodes numerically, we project the infinite-dimensional operators onto finite-dimensional spaces spanned by a given set of basis functions. This will be described in detail in Section~\ref{sec:Derivation}.

\subsection{Reversibility}

We briefly recapitulate the properties of reversible systems. For many applications, including commonly used molecular dynamics models, the dynamics in full phase space are known to be reversible.

\begin{defn}
A system is said to be reversible if the so-called detailed balance condition is fulfilled, i.e., it holds for all $x,y\in\mathbb{X}$ that 
\begin{equation}
    \pi(x)\,p_{\tau}(x,y)=\pi(y)\,p_{\tau}(y,x).\label{eq:detailed_balance}
\end{equation}
\end{defn}

\begin{example}
The Ornstein\textendash Uhlenbeck process is reversible. It is straightforward to verify that \eqref{eq:detailed_balance} is fulfilled by the transition density \eqref{eq:transition_density_OU} and the stationary density
\eqref{eq:stationary_density_OU} for all values of $x$, $y$ and $\tau$. Also general Smoluchowski equations of a $d$-dimensional system of the form 
\begin{equation*}
    \mathrm{d}\mathbf{X}_{t } =-D\nabla V(\mathbf{X}_{t})\,\mathrm{d}t+\sqrt{2dD}\,\mathrm{d}\mathbf{W}_{t}
\end{equation*}
with dimensionless potential $V(x)$ are reversible. The stationary density is then given by $\pi\varpropto\exp(-V(x))$ \cite{Lei06}.\exampleSymbol
\end{example}

As a result of the detailed balance condition, the Koopman operator $\mathcal{K}_{\tau}$ and the Perron\textendash Frobenius operator with respect to the equilibrium density, $\mathcal{T}_{\tau}$, are identical (hence also self-adjoint):
\begin{equation*}
    \mathcal{K_{\tau}}f=\intop_{\mathbb{X}}p_{\tau}(x,y)\,f(y)\,\mathrm{d}y=\intop_{\mathbb{X}}\frac{\pi(y)}{\pi(x)}\,p_{\tau}(y,x)\,f(y)\,\mathrm{d}y=\mathcal{T}_{\tau}\,f.
\end{equation*}
Moreover, both $\mathcal{K}_{\tau}$ and $\mathcal{P}_{\tau}$ become self-adjoint with respect to the stationary density, i.e.
\begin{align*}
    \langle\mathcal{P}_{\tau}f,\,g\rangle_{\pi^{-1}} & =\langle f,\mathcal{\,P}_{\tau}g\rangle_{\pi^{-1}},\\
    \langle\mathcal{K_{\tau}}f,\,g\rangle_{\pi} & =\langle f,\,\mathcal{K}_{\tau}g\rangle_{\pi}.
\end{align*}
Hence, the eigenvalues $\lambda_{\ell}$ are real and the eigenfunctions $\varphi_{\ell}$ of $\mathcal{K}_{\tau}$ form an orthogonal basis with respect to $\langle\cdot,\,\cdot\rangle_{\pi}$. That is, the eigenfunctions can be scaled so that $\langle\varphi_{\ell},\,\varphi_{\ell'}\rangle_{\pi}=\delta_{\ell\ell'}$. Furthermore, the leading eigenvalue $\lambda_{1}$ is the only eigenvalue with absolute value $1$ and we obtain
\begin{equation*}
    1=\lambda_{1}>\lambda_{2}\ge\lambda_{3}\ge\dots,
\end{equation*}
see, e.g., \cite{NKPMN14}. We can then expand a function $f\in L_{\pi}^{2}(\mathbb{X})$ in terms of the eigenfunctions as $f=\sum_{\ell=1}^{\infty}\langle f,\,\varphi_{\ell}\rangle_{\pi}\,\varphi_{\ell}$ such that
\begin{equation}
    \mathcal{K}_{\tau}f=\sum_{\ell=1}^{\infty}\lambda_{\ell}(\tau)\,\langle f,\,\varphi_{\ell}\rangle_{\pi}\,\varphi_{\ell}.\label{eq:spectral_decomposition_rev}
\end{equation}
Furthermore, the eigenvalues decay exponentially with $\lambda_{\ell}(\tau)=\exp(-\kappa_{\ell}\tau)$ with relaxation rate $\kappa_{\ell}$ and relaxation timescale $t_{\ell}^{-1}$. Thus, for a sufficiently large lag time $\tau$, the fast relaxation processes have decayed and \eqref{eq:spectral_decomposition_rev} can be approximated by finitely many terms. The propagator $\mathcal{P}_{\tau}$ has the same eigenvalues and the eigenfunctions $\tilde{\varphi}_{\ell}$ are given by $\tilde{\varphi}_{\ell}(x)=\pi(x)\,\varphi_{\ell}(x)$.

\section{Data-driven approximation of transfer operators\label{sec:Numerical approximation of transfer operators}}

In this section, we will describe different data-driven methods to identify the dominant dynamics of dynamical systems and to compute eigenfunctions of transfer operators associated with the system, namely
TICA and DMD as well as VAC and EDMD. A formal derivation of methods to compute finite-dimensional approximations of transfer operators \textendash{} resulting in the aforementioned methods \textendash{}
will be given in Section~\ref{sec:Derivation}. Although TICA can be regarded as a special case of VAC, and DMD as a special case of EDMD, these methods are often used in different settings. With the aid of TICA, for instance, it is possible to identify the main slow coordinates and to project the dynamics onto the resulting reduced space, which can then be discretized using conventional Markov state models (a special case of VAC or EDMD, respectively, see Subsection~\ref{subsec:Relationships with other methods}). We will introduce the original methods \textendash{} TICA and DMD~\textendash{} first and then extend these methods to the more general case. Since in many publications a different notation is used, we will first start with the required basic definitions. 

In what follows, let $x_{i},y_{i}\in\mathbb{R}^{d}$, $i=1,\dots,m$, be a set of pairs of $d$-dimensional data vectors, where $x_{i}=\mathbf{X}_{t_{i}}$ and $y_{i}=\mathbf{X}_{t_{i}+\tau}$. Here, the underlying dynamical
system is not necessarily known, the vectors $x_{i}$ and $y_{i}$ can simply be measurement data or data from a black-box simulation. In matrix form, this can be written as 
\begin{equation}
    X=\begin{bmatrix}x_{1} & x_{2} & \cdots & x_{m}\end{bmatrix}\quad\text{and}\quad Y=\begin{bmatrix}y_{1} & y_{2} & \cdots & y_{m}\end{bmatrix},\label{eq:DMD data}
\end{equation}
with $X,\,Y\in\mathbb{R}^{d\times m}$. If one long trajectory $\{z_{0},\,z_{1},\,z_{2},\,\dots\}$ of a dynamical system is given, i.e., $z_{i}=\mathbf{X}_{t_{0}+h\,i}$, where $h$ is the step size and $\tau=n_{\tau}\,h$ the lag time, we obtain
\begin{equation*}
    X=\begin{bmatrix}z_{0} & z_{1} & \cdots & z_{m-1}\end{bmatrix}\quad\text{and}\quad Y=\begin{bmatrix}z_{n_{\tau}} & z_{n_{\tau}+1} & \cdots & z_{n_{\tau}+m-1}\end{bmatrix}.
\end{equation*}
That is, in this case $Y$ is simply $X$ shifted by the lag time $\tau$. Naturally, if more than one trajectory is given, the data matrices $X$ and $Y$ can be concatenated.

In addition to the data, VAC and EDMD require a set of uniformly bounded basis functions or observables, given by~$\{\psi_{1},\,\psi_{2},\,\dots,\,\psi_{k}\}\subset L^{\infty}(\mathbb{X})$. Since~$\mathbb{X}$ is assumed to be bounded, we have~$\psi_i\in L^r(\mathbb{X})$ for all~$i=1,\ldots,k$ and~$1\le r\le \infty$. The basis functions could, for instance, be monomials, indicator functions,
radial basis functions, or trigonometric functions. The optimal choice of basis functions remains an open problem and depends strongly on the system. If the set of basis functions is not sufficient to represent the eigenfunctions, the results will be inaccurate. A too large set of basis functions, on the other hand, might lead to ill-conditioned matrices and overfitting. Cross-validation strategies have been developed to detect overfitting~\cite{MP15}.

For a basis $\psi_{i}$, $i=1,\,\dots,\,k$, define $\psi:\mathbb{R}^{d}\to\mathbb{R}^{k}$ to be the vector-valued function given by
\begin{equation}
    \psi(x) = [\psi_{1}(x),\,\psi_{2}(x),\,\dots,\,\psi_{k}(x)]^{T}.\label{eq:Basis functions}
\end{equation}
The goal then is to find the best approximation of a given transfer operator in the space spanned by these basis functions. This will be explained in detail in Section \ref{sec:Derivation}. In addition
to the data matrices $X$ and $Y$, we will need the transformed data matrices
\begin{equation} \label{eq:PsiX PsiY}
    \Psi_{X} = 
    \begin{bmatrix} \psi(x_{1}) & \psi(x_{2}) & \dots & \psi(x_{m}) \end{bmatrix}
    \quad \text{and} \quad
    \Psi_{Y} =
    \begin{bmatrix} \psi(y_{1}) & \psi(y_{2}) & \dots & \psi(y_{m}) \end{bmatrix}.
\end{equation}

\subsection{Time-lagged independent component analysis}

\emph{Time-lagged independent component analysis} (TICA) has been introduced in \cite{MS94} as a solution to the \emph{blind source separation} problem, where the correlation matrix and the time-delayed
correlation matrix are used to separate superimposed signals. The term TICA has been introduced later \cite{HyvaerinenKarhunenOja_ICA_Book}. TDSEP \cite{ZieheMueller_ICANN98_TDSEP}, an extension of TICA, is
popular in the machine learning community. It was shown only recently that TICA is a special case of the VAC by computing the optimal linear projection for approximating the slowest relaxation processes, and as such provides an approximation of the leading eigenvalues and eigenfunctions of transfer operators \cite{PPGDN13}. TICA is now a popular dimension reduction technique in the field of molecular dynamics \cite{PPGDN13,SP13}.
That is, TICA is used as a preprocessing step to reduce the size of the state space by projecting the dynamics onto the main coordinates. The time-lagged independent components are required \textbf{(a)} to be uncorrelated and \textbf{(b)} to maximize the autocovariances at lag time $\tau$, see \cite{HyvaerinenKarhunenOja_ICA_Book,PPGDN13} for more details. Assuming that the system is reversible, the TICA coordinates are the eigenfunctions of $\mathcal{T}_{\tau}$ or $\mathcal{K}_{\tau}$, respectively, projected onto the space spanned by linear basis functions,~i.e., $\psi(x)=x$.

Let $C(\tau)$ be the time-lagged covariance matrix defined by 
\begin{equation*}
    C_{ij}(\tau) = \langle\mathbf{X}_{t,i}\,\mathbf{X}_{t+\tau,j}\rangle_{t} = \mathbb{E}_{\pi}\left[\mathbf{X}_{t,i}\,\mathbf{X}_{t+\tau,j}\right].
\end{equation*}
Given data $X$ and $Y$ as defined above, estimators $C_{0}$ and $C_{\tau}$ for the covariance matrices $C(0)$ and $C(\tau)$ can be computed as
\begin{equation}
    \begin{split}
        C_{0} & =\tfrac{1}{m-1}\sum_{k=1}^{m}x_{k}\,x_{k}^{T}=\tfrac{1}{m-1}XX^{T},\\
        C_{\tau} & =\tfrac{1}{m-1}\sum_{k=1}^{m}x_{k}\,y_{k}^{T}=\tfrac{1}{m-1}XY^{T}.
    \end{split}
    \label{eq:C_O and C_tau TICA}
\end{equation}
The time-lagged independent components are defined to be solutions of the eigenvalue problem
\begin{equation}
    C_{\tau}\,\xi_{\ell}=\lambda_{\ell}\,C_{0}\,\xi_{\ell}\quad\text{or}\quad C_{0}^{+}C_{\tau}\,\xi_{\ell}=\lambda_{\ell}\,\xi_{\ell},\label{eq:TICA eigenvectors}
\end{equation}
respectively. In what follows, let $M_{\tsub{TICA}}=C_{0}^{+}C_{\tau}$,
where $C_{0}^{+}$ denotes the Moore\textendash Penrose pseudo-inverse
of $C_{0}$.

In applications, often the symmetrized estimators 
\begin{equation*}
    C_{0} = \tfrac{1}{2m-2}(XX^{T}+YY^{T}) \quad \text{and} \quad C_{\tau}=\tfrac{1}{2m-2}(XY^{T}+YX^{T})
\end{equation*}
are used so that the resulting TICA coordinates become real-valued. This corresponds to averaging over the trajectory and the time-reversed trajectory. Note that this symmetrization can introduce a large estimator bias that affects the dominant spectrum of \eqref{eq:TICA eigenvectors}, if the process is non-stationary, or the distribution of the data is far from the equilibrium of the process. In the latter case, a reweighting procedure can be applied to obtain weighted versions of the estimators \eqref{eq:C_O and C_tau TICA}, to reduce that bias \cite{WNPKKN16}.

\begin{example}
Let us illustrate the idea behind TICA with a simple example. Consider the data shown in Figure~\ref{fig:PCA and TICA}, which was generated by a stochastic process which will typically spend a long time in one of the two clusters before it jumps to the other. We are interested in finding these metastable sets. Standard \emph{principal component analysis} (PCA) leads to the coordinate shown in red, whereas TICA \textendash{} shown in black \textendash{} takes time-information into account and is thus able to identify the slow direction of the system correctly. Projecting the system onto the $x$-coordinate will preserve the slow process while eliminating the fast stochastic noise.\exampleSymbol

\begin{figure}[tbh]  
    \centering
    \includegraphics[width=0.95\textwidth]{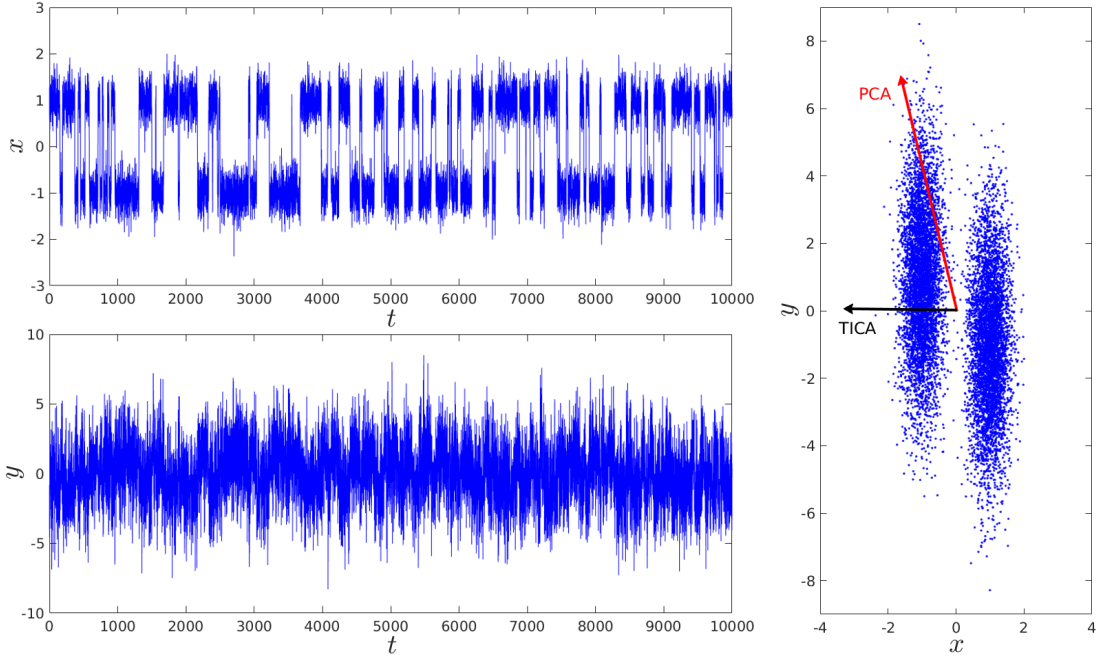}
    \caption{The difference between PCA and TICA. The top and bottom plot show the $x$- and $y$-component of the system, respectively, the plot on the right the resulting main principal component vector and the main TICA coordinate.}
    \label{fig:PCA and TICA}
\end{figure}
\end{example}

\begin{algorithm}
\begin{enumerate}
\item \setlength{\itemsep}{0mm}Compute a reduced SVD of $X$, i.e., $X=U\,\Sigma\,V^{T}$.
\item Whiten data: $\tilde{X}=\Sigma^{-1}U^{T}X$ and $\tilde{Y}=\Sigma^{-1}U^{T}Y$.
\item Compute $\overline{M}_{\tsub{TICA}}=\tilde{X}\tilde{Y}^{T}=\Sigma^{-1}U^{T}XY^{T}U\Sigma^{-1}$.
\item Solve the eigenvalue problem $\overline{M}_{\tsub{TICA}}\,w_{\ell}=\lambda_{\ell}\,w_{\ell}$.
\item The TICA coordinates are then given by $\xi_{\ell}=U\Sigma^{-1}w_{\ell}$.
\end{enumerate}
\vspace*{-0.5\baselineskip}\caption{AMUSE algorithm to compute TICA.}
\label{alg:AMUSE}
\end{algorithm}

The TICA coordinates can be computed using AMUSE\footnote{Algorithm for Multiple Unknown Signals Extraction.} \cite{TSHL90} as shown in Algorithm \ref{alg:AMUSE}. Instead of computing a singular value decomposition of the data matrix $X$ in step 1, an eigenvalue decomposition of the covariance matrix $XX^{T}$ could be computed, which is more efficient if $m\gg d$, but less accurate. The vectors $\xi_{\ell}$ computed by AMUSE are solutions of the eigenvalue problem \eqref{eq:TICA eigenvectors}, since
\begin{align*}
M_{\tsub{TICA}}\,\xi_{\ell} & =\left(XX^{T}\right)^{+}XY^{T}U\Sigma^{-1}w_{\ell}\\
 & =U\Sigma^{-1}\tilde{X}\tilde{Y}^{T}w_{\ell}\\
 & =\lambda_{\ell}\,U\Sigma^{-1}w_{\ell}\\
 & =\lambda_{\ell}\,\xi_{\ell}.
\end{align*}
In the second line, we used the fact that $(XX^{T})^{+}=U\Sigma^{-2}U^{T}$ and in the third that $w_{\ell}$ is an eigenvector of $\overline{M}_{\tsub{TICA}}=\tilde{X}\tilde{Y}^{T}$.

\subsection{Dynamic mode decomposition}

\emph{Dynamic mode decomposition }(DMD) was developed by the fluid dynamics community as a tool to identify coherent structures in fluid flows~\cite{Schmid10}. Since its introduction, several variants and extensions have been proposed, see~\cite{CTR12,TRLBK14,JSN14,BPTK15,KGPS16}. A review of the applications of Koopman operator theory in fluid mechanics can be found in \cite{Mezic13}. DMD can be viewed as a combination of a PCA in the spatial domain and a Fourier analysis in the frequency domain~\cite{BJOK16}. It can be shown that the DMD modes are the Koopman modes for the set of basis functions defined by $\psi(x)=x$. Given again data $X$ and $Y$ as above, the idea behind DMD is to assume that there exists a linear operator $M_{\tsub{DMD}}$ such that $y_{i}=M_{\tsub{DMD}}\,x_{i}$. Since the underlying dynamical system is in general nonlinear, this equation cannot be fulfilled exactly and we want to compute the matrix $M_{\tsub{DMD}}$ in such a way that the Frobenius norm of the deviation is minimized, i.e.,
\begin{equation} \label{eq:Minimization problem DMD}
    \min\norm{Y-M_{\tsub{DMD}}X}_{F}.
\end{equation}
The solution of this minimization problem is given by
\begin{equation} \label{eq:DMD matrix}
    M_{\tsub{DMD}}=YX^{+}=\big(YX^{T}\big)\big(XX^{T}\big)^{+}=C_{\tau}^{T}C_{0}^{+}=M_{\tsub{TICA}}^{T}.
\end{equation}
The eigenvalues and eigenvectors of $M_{\tsub{DMD}}$ are called DMD eigenvalues and modes, respectively. That is, we are solving 
\begin{equation*}
    M_{\tsub{DMD}}\,\xi_{\ell}=\lambda_{\ell}\,\xi_{\ell}.
\end{equation*}
The above equations already illustrate the close relationship with TICA, cf.~\eqref{eq:TICA eigenvectors}. The DMD modes are the right eigenvectors of $M_{\tsub{DMD}}$, whereas the TICA coordinates are defined to be the right eigenvectors of the transposed matrix $M_{\tsub{TICA}}$. Hence, the TICA coordinates are the left eigenvectors of the DMD matrix and the DMD modes the left eigenvectors of the TICA matrix. This is consistent with the results that will be presented in the VAC and EDMD subsections below: The TICA coordinates represent the \emph{Koopman eigenfunctions} projected onto the space spanned by linear basis functions, i.e., $\psi(x)=x$, while the DMD modes are the corresponding \emph{Koopman modes}.

Similar to AMUSE, the DMD modes and eigenvalues can be obtained without explicitly computing $M_{\tsub{DMD}}$ by using a reduced singular value decomposition of $X$. Standard and exact DMD are presented in Algorithm \ref{alg:DMD}. The standard DMD modes are simply the exact DMD modes projected onto the range of the matrix $X$, see~\cite{TRLBK14}.

\begin{algorithm}
\begin{enumerate}
\item \setlength{\itemsep}{0mm}Compute compact SVD of $X$, given by $X=U\,\Sigma\,V^{T}$.
\item Define $\overline{M}_{\tsub{DMD}}=U^{T}YV\Sigma^{-1}$.
\item Compute eigenvalues and eigenvectors of $\overline{M}_{\tsub{DMD}}$, i.e., $\overline{M}_{\tsub{DMD}}\,w_{\ell}=\lambda_{\ell}\,w_{\ell}$.
\item The DMD mode corresponding to the eigenvalue $\lambda_{\ell}$ is defined as
\begin{alignat*}{2}
\text{a) } & \xi_{\ell}=Uw_{\ell}. &  & \qquad\text{(Standard DMD)}\\
\text{b) } & \xi_{\ell}=\frac{1}{\lambda}YV\Sigma^{-1}w_{\ell}. &  & \qquad\text{(Exact DMD)}
\end{alignat*}
\end{enumerate}
\vspace*{-0.75\baselineskip}\caption{Standard and exact DMD.}
\label{alg:DMD}
\end{algorithm}

\begin{rem}
TICA and standard DMD are closely related. When comparing with the AMUSE formulation, we obtain
\begin{equation*}
    \overline{M}_{\tsub{TICA}}=\tilde{X}\tilde{Y}^{T}=\Sigma^{-1}U^{T}XY^{T}U\Sigma^{-1}=\Sigma\,U{}^{T}M_{\tsub{TICA}}\,U\Sigma^{-1}=:W\Lambda W^{-1}
\end{equation*}
and
\begin{equation*}
    \overline{M}_{\tsub{DMD}}=U^{T}YV\Sigma^{-1}=U^{T}M_{\tsub{DMD}}\,U=U^{T}M_{\tsub{TICA}}^{T}U=:\tilde{W}\tilde{\Lambda}\tilde{W}^{-1}.
\end{equation*}
The TICA coordinates are given by $\Xi=U\Sigma^{-1}W$ and the standard DMD modes by $\tilde{\Xi}=U\tilde{W}$ so that \textendash{} except for the scaling $\Sigma^{-1}$ \textendash{} AMUSE and standard DMD use the same projection, the main difference is that the former computes the eigenvectors of $M_{\tsub{TICA}}$ and the latter the eigenvectors of the transposed matrix $M_{\tsub{TICA}}^{T}$. As a result, AMUSE could be rewritten to compute the DMD modes if we define $\overline{M}'_{\tsub{DMD}} = \tilde{Y}\tilde{X}^{T} = \Sigma^{-1}U^{T}YX^{T}U\Sigma^{-1}$ in step 3 of the algorithm and $\xi_{\ell}=U\Sigma\,w_{\ell}$ in step 5, where $w_{\ell}$ now denotes the eigenvectors of $\overline{M}'_{\tsub{DMD}}$.
\end{rem}

\subsection{Variational approach of conformation dynamics}

The \emph{variational approach of conformation dynamics} (VAC) \cite{NoNu13,NKPMN14,NSVN15} is a generalization of the frequently used Markov state modeling framework that allows arbitrary basis functions and is similar to the variational approach in quantum mechanics \cite{NKPMN14}. As described above, VAC and EDMD (see below) require \textendash{} in addition to the data \textendash{} a set of basis functions (also called \emph{dictionary}),
given by $\psi$. The variational approach is defined only for reversible systems \textendash{} EDMD does not require this restriction \textendash{} and computes eigenfunctions of $\mathcal{T}_{\tau}$ or $\mathcal{K}_{\tau}$,
respectively. Using the data matrices $\Psi_{X}$ and $\Psi_{Y}$ defined in~\eqref{eq:PsiX PsiY}, $C_{0}$ and $C_{\tau}$ defined in \eqref{eq:C_O and C_tau TICA} for the transformed data can be estimated as
\begin{equation*}
\begin{split}
    C_{0} & =\tfrac{1}{m-1}\sum_{k=1}^{m}\psi(x_{k})\,\psi(x_{k})^{T}=\tfrac{1}{m-1}\Psi_{X}\Psi_{X}^{T},\\
    C_{\tau} & =\tfrac{1}{m-1}\sum_{k=1}^{m}\psi(x_{k})\,\psi(y_{k})^{T}=\tfrac{1}{m-1}\Psi_{X}\Psi_{Y}^{T}.
\end{split}
\end{equation*}
In what follows, let $M_{\tsub{VAC}}=C_{0}^{+}C_{\tau}$ for the transformed data matrices $\Psi_{X}$ and $\Psi_{Y}$. The matrix $M_{\tsub{VAC}}$ can be regarded as a finite-dimensional approximation of $\mathcal{K}_{\tau}$
(or $\mathcal{T}_{\tau}$, since the system is assumed to be reversible; the derivation is shown in Section~\ref{sec:Derivation}), respectively. Eigenfunctions of the operator can then be approximated by the eigenvectors
of the matrix $M_{\tsub{VAC}}$. Let $\xi_{\ell}$ be an eigenvector of $M_{\tsub{VAC}}$, i.e., 
\begin{equation*}
    M_{\tsub{VAC}}\,\xi_{\ell}=\lambda_{\ell}\,\xi_{\ell},
\end{equation*}
and $\varphi_{\ell}(x)=\xi_{\ell}^{*}\psi(x)$, where $^{*}$ denotes the conjugate transpose. Since 
\begin{equation*}
    \mathcal{K}_{\tau}\varphi_{\ell}(x)\approx(M_{\tsub{VAC}}\,\xi_{\ell})^{*}\,\psi(x)=\lambda_{\ell}\,\xi_{\ell}^{*}\,\psi(x)=\lambda_{\ell}\,\varphi_{\ell}(x),
\end{equation*}
we obtain an approximation of the eigenfunctions of $\mathcal{K}_{\tau}$. The derivation will be described in detail in Section~\ref{sec:Derivation}.

\subsection{Extended dynamic mode decomposition}

\emph{Extended dynamic mode decomposition} (EDMD), a generalization of DMD, can be used to compute finite-dimensional approximations of the Koopman operator, its eigenvalues, eigenfunctions, and eigenmodes \cite{WKR15,WRK15}. It was shown in \cite{KKS16} that EDMD can be extended to approximate also eigenfunction of the Perron\textendash Frobenius operator with respect to the density underlying the data points. With the notation introduced above, the minimization problem \eqref{eq:Minimization problem DMD} for the transformed data matrices $\Psi_{X}$ and $\Psi_{Y}$ can be written as
\begin{equation}
    \min\norm{\Psi_{Y}-M_{\tsub{EDMD}}\Psi_{X}}_{F}.\label{eq:Minimization problem EDMD}
\end{equation}
The solution \textendash{} see also \eqref{eq:DMD matrix} \textendash{}
is given by
\begin{equation*}
    M_{\tsub{EDMD}}=\Psi_{Y}\Psi_{X}^{+}=\big(\Psi_{Y}\Psi_{X}^{T}\big)\big(\Psi_{X}\Psi_{X}^{T}\big)^{+}=C_{\tau}^{T}\,C_{0}^{+}=M_{\tsub{VAC}}^{T}.
\end{equation*}
That is, instead of assuming a linear relationship between the data matrices $X$ and $Y$, EDMD aims at finding a linear relationship between the transformed data matrices $\Psi_{X}$ and $\Psi_{Y}$. Eigenfunctions of the Koopman operator are then given by the left eigenvectors $\xi_{\ell}$ of $M_{\tsub{EDMD}}$, i.e.,
\begin{equation*}
    \varphi_{\ell}(x)=\xi_{\ell}^{*}\,\psi(x).
\end{equation*}
The derivation of EDMD can be found in Section~\ref{sec:Derivation}. Since the left eigenvectors of $M_{\tsub{EDMD}}$ are the right eigenvectors of $M_{\tsub{VAC}}$, VAC and EDMD are equivalent as they compute exactly the same eigenvalue and eigenfunction approximations for a data and basis set. 

As shown in \cite{KKS16}, EDMD can also be used to approximate the Perron\textendash Frobenius operator as follows: 
\begin{equation*}
    \tilde{M}_{\tsub{EDMD}} = \big(\Psi_{X}\Psi_{Y}^{T}\big)\big(\Psi_{X}\Psi_{X}^{T}\big)^{+}=C_{\tau}\,C_{0}^{+}.
\end{equation*}
It is important to note that the Perron\textendash Frobenius operator is computed with respect to the density underlying the data matrices. That is, if $X$ is sampled from a uniform distribution, we obtain the eigenfunctions of the Perron\textendash Frobenius operator $\mathcal{P}_{\tau}$. If we, on the other hand, use one long trajectory, the underlying density converges to the equilibrium density $\pi$ and we obtain the eigenfunctions of the Perron\textendash Frobenius operator with respect to the equilibrium density, denoted by $\mathcal{T}_{\tau}$. An approach to compute the equilibrium density from off-equilibrium data is proposed in \cite{WNPKKN16}.

\begin{example} \label{ex:OU EDMD}
Let us consider the Ornstein\textendash Uhlenbeck process introduced in Example~\ref{ex:OU process}. Here, $\alpha=4$ and $D=0.25$. The lag time is defined to be $\tau=1$. We generated $10^{5}$ uniformly distributed test points in $[-2,\,2]$ and used a basis comprising monomials of order up to $10$. With the aid of EDMD, we computed the dominant eigenfunctions of the Perron\textendash Frobenius operator $\mathcal{P}_{\tau}$ and the Koopman operator $\mathcal{K}_{\tau}$ (which is identical to $\mathcal{T}_{\tau}$ here due to reversibility). The results are shown in Figure~\ref{fig:Eigenfunctions OU}. The corresponding eigenvalues are given by
\begin{equation*}
    \lambda_{1}(\tau)=1.00,\quad\lambda_{2}(\tau)=0.37,\quad\lambda_{3}(\tau)=0.13,\quad\lambda_{4}(\tau)=0.049,
\end{equation*}
which is a good approximation of the analytically computed eigenvalues (Example~\ref{ex:OU eigenvalues}).\exampleSymbol
\end{example}

In order to approximate the Koopman modes, let $\varphi(x)=\left[\varphi_{1}(x),\,\dots,\,\varphi_{k}(x)\right]^{T}$ be the vector of eigenfunctions and
\begin{equation*}
    \Xi = \begin{bmatrix}\xi_{1} & \xi_{2} & \dots & \xi_{k}\end{bmatrix}
\end{equation*}
the matrix that contains all left eigenvectors of $M_{\tsub{EDMD}}$. Furthermore, define $B\in\mathbb{R}^{d\times k}$ such that $g(x)=B\,\psi(x)$. That is, the full-state observable is written in terms of the basis functions\footnote{The easiest way to accomplish this is by adding the observables $x_{i}$, $i=1,\dots,d$, to the set of basis functions.}. Since $\varphi(x)=\Xi^{*}\,\psi(x)$, this leads to $g(x)=B\,\psi(x)=B\,(\Xi^{*})^{-1}\varphi(x)$. Thus, the $\ell$th column vector of the matrix $\boldsymbol{\eta} = B\,(\Xi^{*})^{-1}$ represents the Koopman mode $\eta_{\ell}$ required for the reconstruction of the dynamical system, see \eqref{eq:Koopman reconstruction}.

\subsection{Relationships with other methods\label{subsec:Relationships with other methods}}

For particular choices of basis functions, VAC and EDMD are equivalent to other methods (see also~\cite{NSVN15,KKS16}):
\begin{enumerate}
\item If we choose $\psi(x)=x$, we obtain TICA and DMD, respectively. That is, the TICA coordinates are the eigenfunctions of the Koopman operator projected onto the space spanned by linear basis functions and the
DMD modes are the corresponding Koopman modes. (Note that in this case \textbf{$B=I$} and the matrix $\boldsymbol{\eta} = (\Xi^{*})^{-1}$ contains the right eigenvectors of $M_{\tsub{EDMD}}$.) In many applications
of TICA, the basis functions are modified to have zero mean. For reversible processes, this eliminates the stationary eigenvalue $\lambda_{1}=1$ and its eigenfunction $\varphi_{1}\equiv1$. The largest eigenpair
then approximates the slowest dynamical eigenvalue and eigenfunction, respectively. 
\item If the set of basis functions comprises indicator functions $\mathds{1}_{A_{1}},\,\dots,\,\mathds{1}_{A_{k}}$ for a given decomposition of the state space into disjoint sets $A_{1},\dots,A_{k}$,
VAC and EDMD result in Ulam's method \cite{Ulam60} and thus a Markov state model (MSM).
\end{enumerate}
These relationships are shown in Figure~\ref{fig:Relationships}. Detailed examples illustrating the use of VAC and EDMD can be found in \cite{NKPMN14,WKR15,KKS16}.

\begin{figure}
    \centering
    \includegraphics[width=0.75\linewidth]{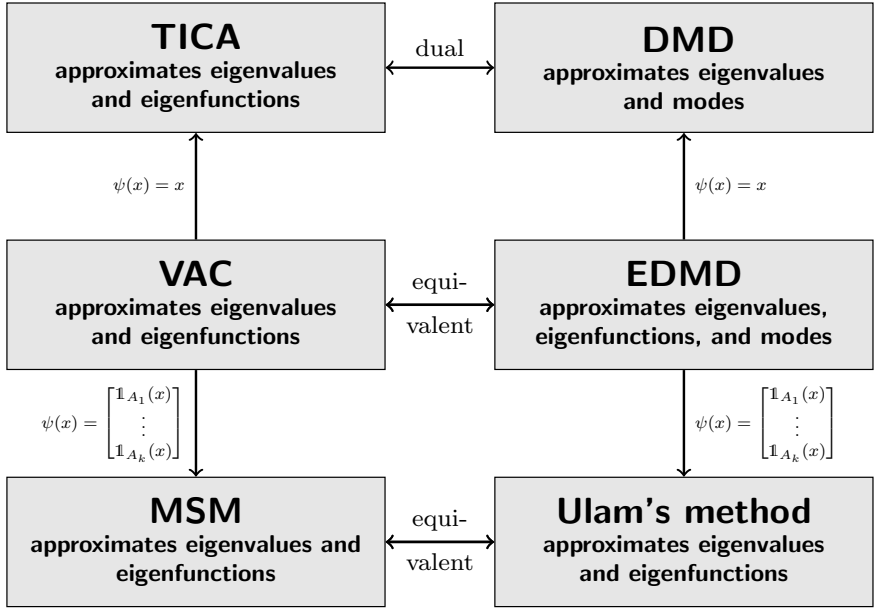}
    \caption{Relationships between data-driven methods. While VAC was derived for reversible dynamical systems, the derivation of EDMD covers non-reversible dynamics as well.}
    \label{fig:Relationships}
\end{figure}

\subsection{Examples}

\subsubsection{Double gyre}

Let us consider the autonomous double gyre, which was introduced in \cite{Shadden05}, given by the SDE
\begin{equation*}
\begin{split}
    d\mathbf{X}_{t} & =-\pi\,A\,\sin(\pi\,\mathbf{X}_{t})\,\cos(\pi\,\mathbf{Y}_{t}) + \varepsilon\,d\mathbf{W}_{t,1},\\
    d\mathbf{Y}_{t} & =\phantom{-}\pi\,A\,\cos(\pi\,\mathbf{X}_{t})\,\sin(\pi\,\mathbf{Y}_{t}) + \varepsilon\,d\mathbf{W}_{t,2}
\end{split}
\end{equation*}
on the domain $\mathbb{X}=[0,2]\times[0,1]$ with reflecting boundary. For $\varepsilon=0$, there is no transport between the left half and the right half of the domain and both subdomains are invariant sets with measure $\frac{1}{2}$, cf.~\cite{FP09,FP14}. For $\varepsilon>0$, there is a small amount of transport due to diffusion and the subdomains are almost invariant. For the Koopman operator $\mathcal{\mathcal{K}_{\tau}}$, this means that for $\varepsilon=0$ the characteristic functions $\tilde{\varphi}_{1}=\mathds{1}_{[0,1]\times[0,1]}$ and $\tilde{\varphi}_{2}=\mathds{1}_{[1,2]\times[0,1]}$ are both eigenfunctions with corresponding eigenvalue $1$. If, on
the other hand, $\varepsilon>0$, then the two-dimensional eigenspace subdivides into two one-dimensional eigenspaces with eigenvalues $\lambda_{1}=1$ and $\lambda_{2}=1-\mathcal{O}(\varepsilon)$ and eigenfunctions $\varphi_{1}=\mathds{1}_{[0,2]\times[0,1]}$ and $\varphi_{2}\approx\tilde{\varphi}_{1}-\tilde{\varphi}_{2}$. Typical trajectories of the system are shown in Figure~\ref{fig:Double gyre}a. Using the parameters $A=0.25$ and $\varepsilon=0.05$, we integrated $10^{5}$ randomly generated test points using the Euler\textendash Maruyama scheme with step size $h=10^{-3}$. 

For the computation of the eigenfunctions, we choose a set of radial basis functions whose centers were given by the midpoints of an equidistant $50\times25$ box discretization, and a lag time $\tau=3$. The resulting nontrivial leading eigenfunctions of the Koopman operator computed with EDMD are shown in Figure~\ref{fig:Double gyre}b. The two almost invariant sets are clearly visible. The eigenfunctions of the Perron\textendash Frobenius operator exhibit similar patterns (but ``rotating'' in the opposite direction).

\begin{figure}[tbph]
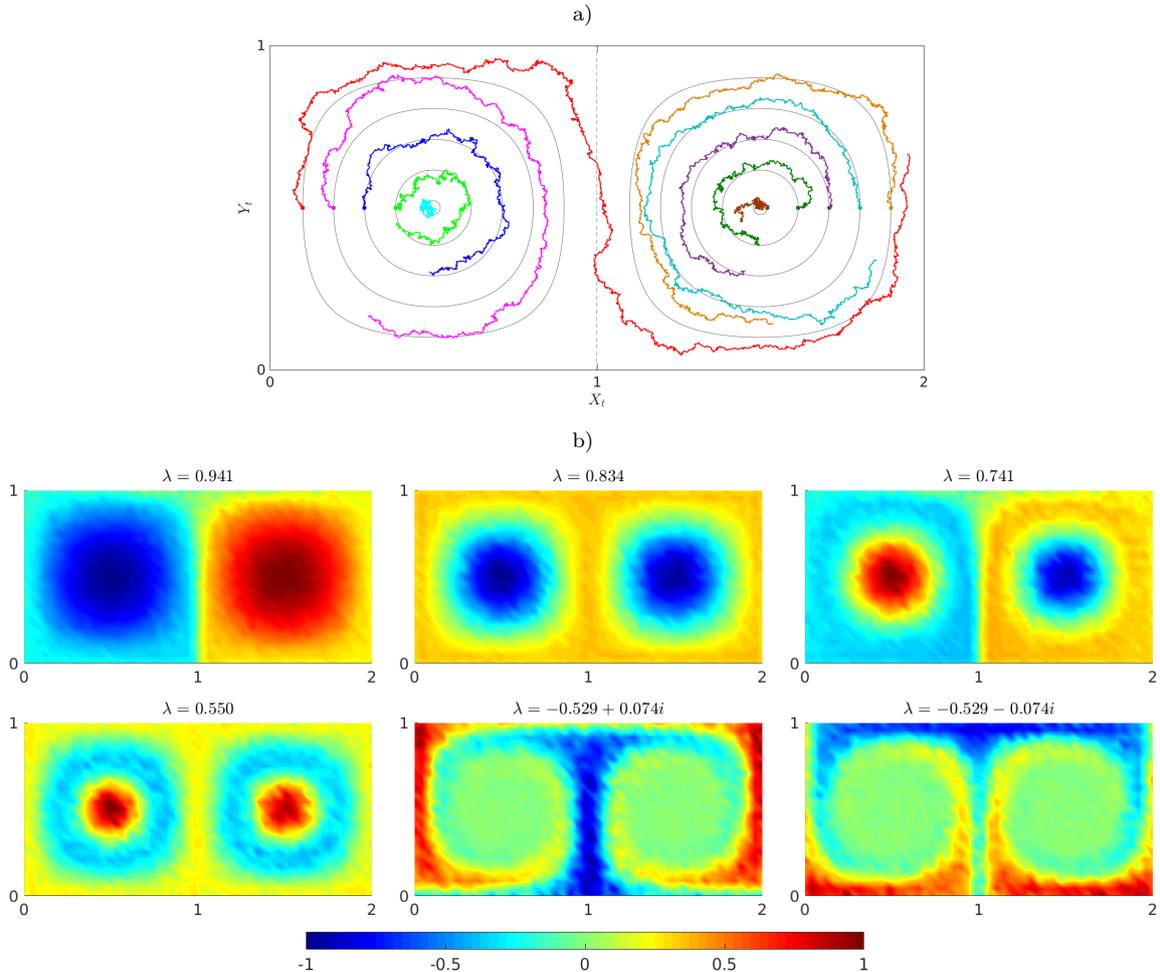

    \centering
    \subfiguretitle{a)}
    \includegraphics[width=0.6\textwidth]{pics/TrajectoriesDG} \\
    \subfiguretitle{b)}
    \includegraphics[width=1\textwidth]{pics/KoopmanDG}
    \vspace{1ex}
    \includegraphics[width=0.5\textwidth]{pics/colorbar}
    \caption{a) Typical trajectories (of different lengths) of the double gyre system for $\varepsilon=0.05$. The initial states are marked by dots. Due to the diffusion term, particles can cross the separatrix (dashed line). The gray lines show the trajectories with the same initial conditions for $\varepsilon=0$. b) Leading eigenfunctions of the Koopman operator associated with the double gyre system computed using EDMD.}
    \label{fig:Double gyre}
\end{figure}

\subsubsection{Deca alanine}

As a second example, we illustrate what has become a typical workflow for the application of VAC/EDMD in molecular dynamics, using deca alanine as a model system. Deca alanine is a small peptide comprised
of ten alanine residues, it has been used as a test system many times before. Here, we analyze equilibrium simulations of $3\,\mathrm{\mu s}$ total simulation time using the Amber03 force field, see \cite{NKPMN14,NSVN15} for the detailed simulation setup. A set of important quantities for our analysis are the leading \textit{implied timescales}
\begin{equation}
    t_m = -\frac{\tau}{\log|\lambda_{m}|},\label{eq:implied_timescale}
\end{equation}
for $ m=2,3,\ldots $. Implied timescales are independent of the lag time \cite[Theorem 2.2.4]{Pazy83}. However, if they are estimated using \eqref{eq:implied_timescale} and an approximation to the eigenvalues $\lambda_{m}$ obtained from VAC/EDMD, the timescales will be underestimated (see Section~\ref{subsec:variational_principle}) and the error will decrease as a function of the lag time \cite{DSS12}. Approximate convergence of implied timescales with increasing lag time has become a standard model validation criterion in molecular dynamics \cite{PWSKSHSCSF11}.

In the first step, a set of internal molecular coordinates is extracted from the simulation data, to which TICA is applied. In our example, we select all 16 backbone dihedral angles as internal molecular coordinates. Figure~\ref{fig:deca_alanine}a shows the first five implied timescales estimated by TICA as a function of the lag time $\tau$.

Next, a first dimension reduction is performed, where the data is projected onto the leading $M$ TICA eigenvectors. The number $M$ is selected by the criterion of total kinetic variance, that is, $M$ is the smallest number such that the cumulative sum of the first $M$ squared eigenvalues exceeds 95 per cent of the total sum of squared eigenvalues \cite{NC15}. Figure~\ref{fig:deca_alanine}c shows the resulting dimension $M$ as a function of the lag time.

As a third step, the reduced data set is discretized by application of a clustering method. In our case, we use $k$-means clustering to assign the data to 50 discrete states. A Markov state model (MSM, equivalent to Ulam's method, see above) is estimated from the discretized time series. We show the first five implied timescales from the MSM in Figure~\ref{fig:deca_alanine}c and observe that estimates improve compared to the TICA approximations. Also, timescale estimates converge for lag times $\tau\geq4\,\mathrm{ns}$.

Finally, we use the converged model at lag time $\tau=4\,\mathrm{ns}$ for further analysis. As the slowest implied timescale $t_{2}$ dominates all others, and as it is the only one which is larger than the lag time used for analysis (indicated by the gray line in Figure~\ref{fig:deca_alanine}c), we attempt to extract a two-state model that captures the essential dynamics. We employ the PCCA+ algorithm \cite{DW05,RW13} to coarse grain all MSM states into two macrostates. Inspection of randomly selected trajectory frames belonging to each macrostate reveals that the slow dynamical process in the data corresponds to the formation of a helix, see FigureR~\ref{fig:deca_alanine}d. It should be noted that this coarse graining works well for visualization purposes, but some details need to be taken into account. In fact, PCCA performs a fuzzy assignment of MSM states to macrostates, where each MSM state
belongs to each macrostate with a certain \textit{membership} in $[0,1]$. We simply assign each MSM state to the macrostate with maximal membership here. Alternatively, we could also use a \textit{hidden Markov model} (HMM) to perform the coarse graining \cite{NWPP13}.

\begin{figure}[htbp]
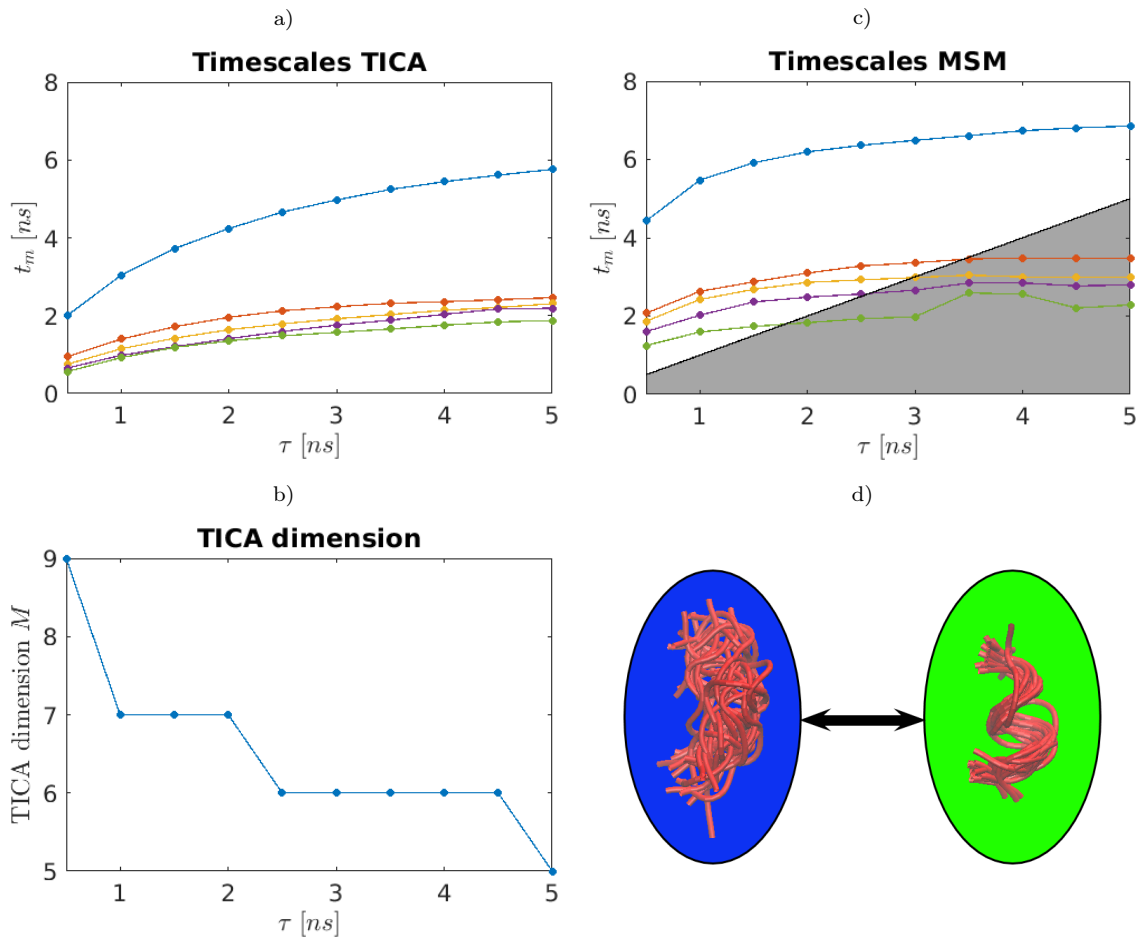

    \begin{minipage}{0.49\textwidth}
        \centering
        \subfiguretitle{a)}
        \includegraphics[width=\textwidth]{pics/Ala10a}
    \end{minipage}
    \begin{minipage}{0.49\textwidth}
        \centering
        \subfiguretitle{c)}
        \includegraphics[width=\textwidth]{pics/Ala10c}
    \end{minipage}
    \\[1ex]
    \begin{minipage}[t]{0.49\textwidth}
        \centering
        \subfiguretitle{b)}
        \includegraphics[width=\textwidth]{pics/Ala10b}
    \end{minipage}
    \begin{minipage}[t]{0.49\textwidth}
        \centering
        \subfiguretitle{d)}
        \vspace*{4ex}
        \includegraphics[width=0.85\textwidth]{pics/Ala10d}
    \end{minipage}
    \caption{Illustration of standard EDMD workflow in molecular dynamics using the deca alanine model system. a) Leading implied timescales $t_m$ (in nanoseconds) as estimated by TICA as a function of the lag time.b) Effective dimension $M$ selected by applying the criterion of total kinetic variance to the TICA eigenvalues. c) Leading implied timescales $t_{m}$ estimated by a Markov state model after projecting the data onto the first $M$ TICA eigenvectors and discretizing this data set into 50 states using $k$-means. d) Simple visualization of effective coarse grained dynamics. All MSM states are assigned to two macrostates using the PCCA algorithm. An overlay of representative structures from both macrostates shows that the dynamics between them corresponds to helix formation. Macrostates are drawn proportionally to their stationary probability. }
\label{fig:deca_alanine}
\end{figure}

\section{Derivations\label{sec:Derivation}}

In this section, we will show how VAC and EDMD as well as their respective special cases TICA and DMD can be derived and how these methods are related to eigenfunctions and eigenmodes of transfer operators.

\subsection{General dynamical systems}

Let us begin with general, not necessarily reversible dynamical systems. In order to be able to compute eigenfunctions of transfer operators numerically, the infinite-dimensional operators are projected onto a finite-dimensional space. We will briefly outline how the EDMD minimization problem \eqref{eq:Minimization problem EDMD} leads to an approximation of the Koopman operator.

\begin{thm}
\label{thm:EDMD minimization problem}Let the process $\{\mathbf{X}_{t}\}_{t\ge0}$ be Feller-continuous\footnote{A process $\{\mathbf{X}_{t}\}_{t\ge0}$ is called Feller-continuous if the mapping $x\mapsto\mathbb{E}[g(\mathbf{X}_{t})\vert\mathbf{X}_{0}=x]$ is continuous for any fixed continuous function $g$. This implies, that the Koopman operator of a Feller-continuous process has a well defined restriction from $L^{\infty}(\mathbb{X})$ to the set of continuous functions. Any stochastic process generated by an Itô stochastic differential equation with Lipschitz-continuous coefficients is Feller-continuous \cite[Lemma 8.1.4]{Oks03}.}. Let $\psi_{i}$, $i=1,\,\dots,\,k$, be the set of at least piecewise continuous basis functions of the finite-dimensional linear space $\mathbb{V}$. Let the empirical distribution of the data points $x_{1},x_{2},\ldots$ converge weakly to the density $\rho$. Then the minimization problem
\begin{equation*}
    \min_{K\in\mathbb{R}^{k\times k}}\frac{1}{m}\sum_{j=1}^{m}\norm{\psi(y_{j})-K^{T}\psi(x_{j})}_{2}^{2}
\end{equation*}
converges, as $m\to\infty$, almost surely to $\min_{\hat{K}}\sum_{i=1}^{k}\|\mathcal{\mathcal{K_{\tau}}}\psi_{i}-\hat{K}\psi_{i}\|_{\rho}^{2}$, where the minimization is over all linear mappings $\hat{K}:\mathbb{V}\to\mathbb{V}$.
\end{thm}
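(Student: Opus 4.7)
The plan is to expand the empirical squared norm component-wise, pass each empirical average to its population limit using the weak convergence of the samples combined with the Feller property, and then identify the limiting cost (via a conditional-expectation identity) with the Koopman-projection cost on the right, up to a $K$-independent variance constant.

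First I would write the empirical objective
\[
J_m(K) := \frac{1}{m}\sum_{j=1}^{m}\sum_{i=1}^{k}\bigl(\psi_i(y_j)-(K^T\psi(x_j))_i\bigr)^2
\]
as a quadratic polynomial in the entries of $K$ whose coefficients are empirical averages of $\psi_l(x_j)\psi_{l'}(x_j)$, $\psi_i(y_j)\psi_l(x_j)$, and $\psi_i(y_j)\psi_l(y_j)$. Because the $\psi_i$ are (piecewise continuous and) uniformly bounded, these products are bounded measurable functions of $(x_j,y_j)$. Feller-continuity makes the conditional averages $x\mapsto \mathbb{E}[\psi_i(Y)\mid X=x]=\mathcal{K}_\tau\psi_i(x)$ and $x\mapsto\mathcal{K}_\tau(\psi_i\psi_l)(x)$ bounded and continuous in $x$, so the assumed weak convergence of the empirical $x$-distribution to $\rho$, combined with a conditional strong law of large numbers (or the pointwise ergodic theorem in the single-long-trajectory case), yields almost sure convergence $J_m(K)\to J(K)$ for every fixed $K$, where
\[
J(K)=\sum_{i=1}^{k}\mathbb{E}\bigl[(\psi_i(Y)-\hat K\psi_i(X))^2\bigr], \qquad \hat K\psi_i := \sum_{l} K_{li}\psi_l.
\]

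Next I would invoke the tower property. Since $\psi_i(Y)-\mathcal{K}_\tau\psi_i(X)$ is $L^2$-orthogonal to every $\sigma(X)$-measurable function, the bias--variance identity
\[
\mathbb{E}\bigl[(\psi_i(Y)-\hat K\psi_i(X))^2\bigr]=\|\mathcal{K}_\tau\psi_i-\hat K\psi_i\|_\rho^{2}+\mathbb{E}\bigl[\operatorname{Var}(\psi_i(Y)\mid X)\bigr]
\]
holds for every linear $\hat K$. Summing in $i$ gives $J(K)=\sum_{i}\|\mathcal{K}_\tau\psi_i-\hat K\psi_i\|_\rho^{2}+C$, where $C\ge 0$ depends on $\rho$ and $\mathcal{K}_\tau$ but not on $K$. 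Hence the empirical least-squares problem and the Koopman-projection problem have the same minimizers, which is the content of the theorem (their minimum values differ by the irreducible noise term $C$). To upgrade pointwise a.s.\ convergence of $J_m$ to convergence of minima, I would note that $J_m$ is a convex quadratic in $K\in\mathbb{R}^{k\times k}$ whose Hessian $\frac{2}{m}\Psi_X\Psi_X^T$ converges a.s.\ to $G:=\mathbb{E}_\rho[\psi\psi^T]$; if $G$ is non-singular (the natural $\rho$-linear-independence assumption on the dictionary), the explicit minimizer $K_m^\star=(\Psi_X\Psi_X^T)^{+}\Psi_X\Psi_Y^T$ is a continuous function of the empirical coefficients and therefore converges to the unique population minimizer, so $\min_K J_m(K)$ converges a.s.\ to $\min_{\hat K}J(K)$.

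The main obstacle is the joint convergence of averages of $(x_j,y_j)$-functionals, since only the $x$-marginal has been assumed to converge weakly. The Feller property is exactly what bridges this gap: it lets me absorb the $y_j$ into conditional expectations that are bounded continuous in $x$, after which the assumed marginal convergence plus ergodicity/independence of the sampling scheme delivers the averages I need. A secondary delicacy is the treatment of the possibly rank-deficient Gram matrix $G$: in the degenerate case one restricts the minimization to the range of $G$ (i.e.\ to the $\rho$-a.e.\ distinct basis functions) and runs the same argument on the reduced problem, taking some care with the continuity of the Moore--Penrose pseudoinverse.
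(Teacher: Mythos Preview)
Your argument follows the same skeleton as the paper: expand the empirical objective component-wise, and pass to the limit using Feller continuity together with weak convergence of the $x$-samples to~$\rho$. The paper's proof stops there, writing directly
\[
\frac{1}{m}\sum_{j=1}^{m}(\psi_{i}(y_{j})-\hat{K}\psi_{i}(x_{j}))^{2}\;\longrightarrow\;\int_{\mathbb{X}}\bigl(\mathcal{K}_\tau\psi_i(x)-\hat{K}\psi_{i}(x)\bigr)^{2}\rho(x)\,dx,
\]
without the intermediate bias--variance step. Your version is in fact more careful on this point: for a genuinely stochastic process the empirical average converges to $\mathbb{E}[(\psi_i(Y)-\hat K\psi_i(X))^2]$, which differs from $\|\mathcal{K}_\tau\psi_i-\hat K\psi_i\|_\rho^2$ by the $K$-independent conditional-variance term you call~$C$. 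So the two minimization problems share minimizers, but their optimal values differ by~$C$; the paper's display suppresses this constant. You also go beyond the paper in two respects it leaves implicit: you argue that pointwise convergence of the convex quadratics $J_m$ implies convergence of the minimizers (via convergence of the empirical Gram matrix to $G=\mathbb{E}_\rho[\psi\psi^T]$), and you flag the rank-deficient case. Both additions are appropriate; the paper establishes only pointwise convergence of the objective for fixed $K$ and does not separately treat convergence of $\arg\min$ or $\min$.
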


\begin{proof}
Let $f=\sum_{i=1}^{k}a_{i}\,\psi_{i}=a^{T}\psi\in\mathbb{V}$ be an arbitrary function, where $a=[a_{1},\,\dots,\,a_{k}]^{T}$. For a single data point $x_{j}$, we have for a linear mapping $\hat{K}:\mathbb{V}\to\mathbb{V}$
with matrix representation $K\in\mathbb{R}^{k\times k}$ that 
\begin{equation*}
    \hat{K}f(x_{j})=\sum_{i=1}^{k}(K\,a)_{i}\,\psi_{i}(x_{j})=a^{T}K^{T}\psi(x_{j}).
\end{equation*}
Here, the $i$th column of the matrix $ K $ corresponds to $ \hat{K} \psi_i $. Thus, we obtain
\begin{eqnarray*}
\frac{1}{m}\sum_{j=1}^{m}\norm{\psi(y_{j})-K^{T}\psi(x_{j})}_{2}^{2} & = & \sum_{i=1}^{k}\frac{1}{m}\sum_{j=1}^{m}(\psi_{i}(y_{j})-\hat{K}\psi_{i}(x_{j}))^{2}\\
 & \stackrel{m\to\infty}{\longrightarrow} & \sum_{i=1}^{k}\int_{\mathbb{X}}(\mathbb{E}[\psi_{i}(\mathbf{X}_{\tau})\,\vert\,\mathbf{X}_{0}=x]-\hat{K}\psi_{i}(x))^{2}\rho(x)\,dx\\
 & = & \sum_{i=1}^{k}\|\mathcal{\mathcal{K_{\tau}}}\psi_{i}-\hat{K}\psi_{i}\|_{\rho}^{2},
\end{eqnarray*}
where the convergence for $m\to\infty$ is almost sure. From the first line to the second we used that the $y_{j}$ are realizations of the random variables $\mathbf{X}_{\tau}$ given $\mathbf{X}_{0}=x_{j}$, that $\mathbf{X}_{\tau}$ is a Feller-continuous process, that the $\psi_{i}$ are (piecewise) continuous functions, and that the sampling process of $x_{j}$ is independent of the noise process that decides over $\mathbf{X}_{\tau}$ given $\mathbf{X}_{0}=x_{j}$.
\end{proof}

With the aid of the data matrices $\Psi_{X}$ and $\Psi_{Y}$ defined in \eqref{eq:PsiX PsiY}, this minimization problem can be written as
\begin{equation*}
    \min\norm{\Psi_{Y}-K^{T}\Psi_{X}}_{F}^{2},
\end{equation*}
which is identical to \eqref{eq:Minimization problem EDMD}, where now $K^{T}=M_{\tsub{EDMD}}$. Thus, the transposed EDMD matrix $M_{\tsub{EDMD}}$ is an approximation of the Koopman operator. A similar setup allows for the approximation of the Perron\textendash Frobenius operator with respect to the data point density $\rho$. For details, we refer to \cite[Appendix A]{KKS16}. Note, however, that although the Perron\textendash Frobenius and Koopman operators are adjoint, the matrix representation of the discrete Perron\textendash Frobenius operator will in general not just be the transposed of the matrix $K$, unless the ansatz functions $\psi_{i}$ are orthonormal with respect to $\langle\cdot,\,\cdot\rangle_{\rho}$.

If the dynamical system is deterministic, we can already interpret the minimization \eqref{eq:Minimization problem EDMD} for finite values of $m$. As shown, e.g., in \cite{KKS16,KoMe17}, the solution of \eqref{eq:Minimization problem EDMD} is a Petrov\textendash Galerkin projection of the Koopman operator on the ansatz space $\mathbb{V}$.

\subsection{Reversible dynamical systems}

Let us now assume that the system is reversible. That is, it holds that $\pi(x)\,p_{\tau}(x,y)=\pi(y)\,p_{\tau}(y,x)$ for all $x$ and $y$.

\subsubsection{Variational principle for the Rayleigh trace\label{subsec:variational_principle}}

We can also derive a variational formulation for the first $M$ eigenvalues of the Koopman operator $\mathcal{K}_{\tau}$ in the reversible setting. It is a standard result for self-adjoint operators on a Hilbert space with bounded eigenvalue spectrum, see, e.g., \cite{Ban80}:

\begin{prop} \label{prop:Variational principle Rayleigh trace}
Assume that $1=\lambda_{1}>\lambda_{2}\geq\ldots\geq\lambda_{M}$ are the dominant eigenvalues of the Koopman operator $\mathcal{K}_{\tau}$ on $L_{\pi}^{2}$. Then
\begin{equation} \label{eq:Variational formulation Rayleigh trace}
    \begin{split}
        \sum_{\ell=1}^{M}\lambda_{\ell} & =\sup\sum_{\ell=1}^{M}\langle\mathcal{K}_{\tau}v_{\ell},v_{\ell}\rangle_{\pi},\\
        \langle v_{\ell},v_{\ell^{'}}\rangle_{\pi} & =\delta_{\ell\ell^{'}}
    \end{split}
\end{equation}
The sum of the first $M$ eigenvalues maximizes the Rayleigh trace, which is the sum on the right-hand side of \eqref{eq:Variational formulation Rayleigh trace} over all selections of $M$ orthonormal functions $v_{\ell}$. The maximum is attained for the first $M$ eigenfunctions $\varphi_{1},\ldots,\varphi_{M}$.
\end{prop}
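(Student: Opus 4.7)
The plan is to prove this by expanding the trial functions $v_\ell$ in the orthonormal eigenbasis of $\mathcal{K}_\tau$ and then reducing the supremum to a finite combinatorial problem about weighted sums of eigenvalues. Reversibility is the key prerequisite: it was already established in the excerpt that the eigenfunctions $\{\varphi_n\}_{n\geq 1}$ of $\mathcal{K}_\tau$ form an orthonormal basis of $L^2_\pi(\mathbb{X})$ with real eigenvalues, which ensures that every $v_\ell \in L^2_\pi$ admits a convergent expansion $v_\ell = \sum_n c_{\ell,n}\, \varphi_n$ with $\sum_n |c_{\ell,n}|^2 = \|v_\ell\|_{L^2_\pi}^2 = 1$.

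First I would substitute this expansion into the Rayleigh trace. Using self-adjointness and the spectral identity $\mathcal{K}_\tau \varphi_n = \lambda_n \varphi_n$, one immediately gets
\begin{equation*}
\langle \mathcal{K}_\tau v_\ell, v_\ell \rangle_\pi = \sum_{n\geq 1} \lambda_n |c_{\ell,n}|^2,
\end{equation*}
so that summing over $\ell=1,\dots,M$ yields $\sum_\ell \langle \mathcal{K}_\tau v_\ell, v_\ell \rangle_\pi = \sum_{n\geq 1} \lambda_n w_n$, where $w_n := \sum_{\ell=1}^M |c_{\ell,n}|^2$. The proof is then reduced to maximizing $\sum_n \lambda_n w_n$ under the constraints that are induced on $(w_n)$ by the orthonormality of the $v_\ell$'s.

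The second step is to identify those constraints. Collect the coefficients into the (possibly infinite) matrix $C = (c_{\ell,n})_{1\leq \ell \leq M,\, n\geq 1}$. Orthonormality $\langle v_\ell, v_{\ell'}\rangle_\pi = \delta_{\ell\ell'}$ translates into $C C^* = I_M$, so $C$ is a partial isometry of rank $M$; then $P := C^* C$ is an orthogonal projection of rank $M$, and $w_n = P_{nn}$. Hence $0 \leq w_n \leq 1$ for every $n$, and $\sum_n w_n = \operatorname{tr}(P) = M$. The problem becomes: maximize $\sum_n \lambda_n w_n$ subject to $0 \leq w_n \leq 1$ and $\sum_n w_n = M$. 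Since $\lambda_1 \geq \lambda_2 \geq \dots$ with $\lambda_\ell \geq \lambda_M$ for $\ell \leq M$ and $\lambda_\ell \leq \lambda_M$ for $\ell > M$, a direct rearrangement argument (moving any mass from $n > M$ to some empty slot with $n \leq M$ cannot decrease the sum) shows that the supremum equals $\sum_{\ell=1}^M \lambda_\ell$, and is achieved by $w_\ell = 1$ for $\ell \leq M$ and $w_\ell = 0$ otherwise. Taking $v_\ell = \varphi_\ell$ realizes this choice, which proves both the supremum equality and attainment.

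The main obstacle I anticipate is mostly bookkeeping rather than a deep difficulty: one has to justify that the infinite series manipulations are legitimate (interchange of summation and inner product, convergence of $\sum_n \lambda_n w_n$) and to confirm the constraint $\sum_n w_n = M$ in the infinite-dimensional setting. These follow from $\|v_\ell\|_{L^2_\pi}=1$, Bessel/Parseval, and the boundedness $|\lambda_n|\leq 1$ of the Koopman operator under reversibility, so no additional assumption is needed. The only subtle point is that without a spectral gap assumption beyond what is given (namely $\lambda_M$ strictly separated from the rest is not assumed), the maximizer among orthonormal $M$-tuples need not be unique, but the value of the supremum and attainment by $\varphi_1,\dots,\varphi_M$ are unaffected.
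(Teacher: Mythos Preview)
Your argument is correct and takes a genuinely different route from the paper. The paper proceeds by an iterative orthogonalization: starting from the $M$-dimensional span $\mathbb{V}$ of the $v_\ell$, it picks a unit $u_M \in \mathbb{V}$ orthogonal to $\varphi_1,\dots,\varphi_{M-1}$ (existence by dimension counting), applies the single-vector Rayleigh principle to get $\langle \mathcal{K}_\tau u_M, u_M\rangle_\pi \leq \lambda_M$, then repeats inside the orthogonal complement of $u_M$ in $\mathbb{V}$ to produce $u_{M-1}$ orthogonal to $\varphi_1,\dots,\varphi_{M-2}$ with $\langle \mathcal{K}_\tau u_{M-1}, u_{M-1}\rangle_\pi \leq \lambda_{M-1}$, and so on; summing and invoking basis-invariance of the Rayleigh trace over $\mathbb{V}$ gives the bound. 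Your approach instead expands directly in the eigenbasis and reduces the problem to the linear program of maximizing $\sum_n \lambda_n w_n$ over the diagonal weights $w_n$ of the rank-$M$ projection $C^*C$, solved by a one-line rearrangement. Your method is more self-contained---it avoids both the auxiliary basis construction and the appeal to the scalar Rayleigh principle as a black box---while the paper's argument is more geometric and makes the role of each eigenspace visible step by step. Both rely on exactly the same input (self-adjointness and completeness of the eigenbasis under reversibility), and both share the caveat you noted about non-uniqueness of the maximizer when $\lambda_M = \lambda_{M+1}$.
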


\begin{proof}
The $M$-dimensional space $\mathbb{V}$ spanned by the functions $v_{\ell}$ must contain an element $u_{M}$ which is orthonormal to the first $M-1$ eigenfunctions $\varphi_{\ell}$, i.e., $\langle u_{M},\varphi_{\ell}\rangle_{\pi} = 0$, $\ell=1,\ldots,M-1$, and $ \norm{u_M}_\pi = 1 $. By the standard Rayleigh principle for self-adjoint operators
\begin{equation*}
    \langle K_{\tau}u_{M},u_{M}\rangle_{\pi}\leq\lambda_{M}.
\end{equation*}
Next, determine a normalized element $u_{M-1}$ of the orthogonal complement of $u_{M}$ in $\mathbb{V}$ with $\langle u_{M-1},\varphi_{\ell}\rangle_{\pi}=0$, $\ell=1,\ldots,M-2$. Again, we can invoke the Rayleigh principle to find
\begin{equation*}
    \langle\mathcal{K}_{\tau}u_{M-1},u_{M-1}\rangle_{\pi}\leq\lambda_{M-1}.
\end{equation*}
Repeating this argument another $M-2$ times provides an orthonormal basis $u_{1},\ldots,u_{M}$ of the space $\mathbb{V}$ such that
\begin{equation*}
    \sum_{\ell=1}^{M} \langle\mathcal{K}_{\tau}u_{\ell},u_{\ell}\rangle_{\pi} \leq \sum_{\ell=1}^{M}\lambda_{\ell}.
\end{equation*}
As the Rayleigh trace is independent of the choice of orthonormal basis for the subspace $\mathbb{V}$, and the space itself was arbitrary, this proves \eqref{eq:Variational formulation Rayleigh trace}. Clearly, the maximum is attained for the first $M$ eigenfunctions.
\end{proof}

Proposition \ref{prop:Variational principle Rayleigh trace} motivates the variational approach developed in \cite{NoNu13,NueskeEtAl_JCTC14_Variational} to maximize the Rayleigh trace restricted to some fixed space of ansatz
functions:

\begin{prop}
Let $\mathbb{V}$ be a space of $k$ linearly independent ansatz functions $\psi_{i}$ given by a dictionary as above. The set of $M\leq k$ mutually orthonormal functions $f^{\ell}=\sum_{i=1}^{k}a_{i}^{\ell}\,\psi_{i}$ which maximize the Rayleigh trace of the Koopman operator restricted to $\mathbb{V}$ is given by the first $M$ eigenvectors of the generalized eigenvalue problem
\begin{equation}
    C_{\tau}\,a^{\ell}=\hat{\lambda}_{\ell}\,C_{0}\,a^{\ell},\label{eq:Generalized_EV_Problem}
\end{equation}
where $a^{\ell}=\left(a_{i}^{\ell}\right)_{i=1}^{k}$, and the matrices $C_{\tau},\,C_{0}$ are given by
\begin{equation*}
    \begin{split}
        (C_{\tau})_{ij} & =\langle\mathcal{K}_{\tau}\psi_{i},\,\psi_{j}\rangle_{\pi},\\
        (C_{0})_{ij} & =\langle\psi_{i},\,\psi_{j}\rangle_{\pi}.
    \end{split}
\end{equation*}
\end{prop}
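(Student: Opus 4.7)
The plan is to reduce the infinite-dimensional variational problem over $\mathbb{V}$ to a finite-dimensional trace-maximization on $\mathbb{R}^k$, and then to invoke the same Rayleigh trace argument already used in Proposition~\ref{prop:Variational principle Rayleigh trace}.

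First, I would translate the problem into coefficient form. Writing $f^\ell = \sum_{i=1}^k a_i^\ell\,\psi_i$ and using bilinearity of $\langle\cdot,\cdot\rangle_\pi$, together with the reversibility assumption (which makes $\mathcal{K}_\tau$ self-adjoint on $L_\pi^2$ and hence $C_\tau$ symmetric), the objective becomes $\sum_{\ell=1}^M (a^\ell)^T C_\tau\, a^\ell$ and the orthonormality constraint $\langle f^\ell, f^{\ell'}\rangle_\pi = \delta_{\ell\ell'}$ becomes $(a^\ell)^T C_0\, a^{\ell'} = \delta_{\ell\ell'}$. Collecting the coefficient vectors as columns of $A = [a^1,\dots,a^M] \in \mathbb{R}^{k\times M}$, the task is to maximize $\tr(A^T C_\tau A)$ subject to $A^T C_0 A = I_M$.

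Second, I would whiten the constraint. Linear independence of the $\psi_i$ makes $C_0$ symmetric positive definite, since $a^T C_0\, a = \|\sum_i a_i \psi_i\|_\pi^2 > 0$ for $a\neq 0$. Therefore $C_0$ admits a Cholesky factorization $C_0 = L L^T$. Substituting $B = L^T A$ transforms the problem into maximizing $\tr(B^T \tilde C_\tau B)$ subject to $B^T B = I_M$, where $\tilde C_\tau := L^{-1} C_\tau L^{-T}$ is symmetric. This is a standard Rayleigh trace problem on $\mathbb{R}^k$, structurally identical to~\eqref{eq:Variational formulation Rayleigh trace} applied to the symmetric matrix $\tilde C_\tau$.

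Third, applying verbatim the argument from the proof of Proposition~\ref{prop:Variational principle Rayleigh trace} — which works for any self-adjoint operator with bounded spectrum, in particular the finite-dimensional $\tilde C_\tau$ — the maximum is attained when the columns $b^\ell$ of $B$ are the top $M$ orthonormal eigenvectors of $\tilde C_\tau$ with eigenvalues $\hat\lambda_1 \ge \dots \ge \hat\lambda_M$. Undoing the substitution, $a^\ell = L^{-T} b^\ell$ satisfies $\tilde C_\tau L^T a^\ell = \hat\lambda_\ell\, L^T a^\ell$; multiplying on the left by $L$ and using $L L^T = C_0$ gives the generalized eigenvalue problem $C_\tau\, a^\ell = \hat\lambda_\ell\, C_0\, a^\ell$, as claimed. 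The one genuinely conceptual point — and the only place where the hypotheses are used essentially — is that reversibility makes $C_\tau$ symmetric, which is what allows the reduction to a symmetric trace-maximization; everything else is finite-dimensional linear algebra and a re-use of the already-established Rayleigh trace principle.
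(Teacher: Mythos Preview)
Your proposal is correct and follows essentially the same approach as the paper: translate the constrained Rayleigh trace on $\mathbb{V}$ into coefficient form, whiten with respect to $C_0$, apply Proposition~\ref{prop:Variational principle Rayleigh trace} to the resulting symmetric matrix, and undo the whitening to recover the generalized eigenvalue problem. The only cosmetic difference is that the paper whitens via the symmetric square root $C_0^{-1/2}$ whereas you use a Cholesky factor $L$; both yield the same conclusion, and your explicit remark that reversibility is what makes $C_\tau$ symmetric is a useful clarification the paper leaves implicit.
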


\begin{proof}
First, note that for any functions $f=\sum_{i=1}^{k}a_{i}\,\psi_{i}$ and $g=\sum_{i=1}^{k}b_{i}\,\psi_{i}$, we have that
\begin{equation*}
    \begin{split}
        \langle\mathcal{K}_{\tau}f,g\rangle_{\pi} & =a^{T}C_{\tau}\,b,\\
        \langle f,g\rangle_{\pi} & =a^{T}C_{0}\,b.
    \end{split}
\end{equation*}
Let us assume that the ansatz functions are mutually orthonormal, i.e., $C_{0}=I$. Then, maximization of the Rayleigh trace is equivalent to finding $M$ vectors $a^{\ell}$, such that $\left(a^{\ell}\right)^{T}a^{\ell^{'}}=\delta_{\ell\ell^{'}}$ and
\begin{equation*}
    \sum_{\ell=1}^{M}\left(a^{\ell}\right)^{T}C_{\tau}\,a^{\ell}=\sum_{\ell=1}^{M}\langle C_{\tau}\,a^{\ell},a^{\ell}\rangle
\end{equation*}
is maximal. By Proposition \ref{prop:Variational principle Rayleigh trace} applied to the operator $C_{\tau}$ on $\mathbb{R}^{N}$, the vectors $a^{\ell}$ are given by the first $M$ eigenvectors of $C_{\tau}$. In the general case, transform the basis functions into a set of mutually orthonormal functions $\tilde{\psi}_{i}$ via $\tilde{\psi}_{i}=\sum_{j=1}^{k}C_{0}^{-1/2}(j,i)\,\psi_{j}$. For the transformed basis, we need to compute the eigenvectors $\tilde{a}^{\ell}$ of
\begin{equation*}
    C_{0}^{-1/2}C_{\tau}C_{0}^{-1/2}\,\tilde{a}^{\ell}=\hat{\lambda}_{\ell}\,\tilde{a}^{\ell}.
\end{equation*}
This is equivalent to the generalized eigenvalue problem \eqref{eq:Generalized_EV_Problem}, the relation between the eigenvectors is given by
\begin{equation*}
    a^{\ell} = C_{0}^{-1/2}\,\tilde{a}^{\ell}.\tag*{\qedhere}
\end{equation*}
\end{proof}

\section{Conclusion\label{sec:Conclusion}}

In this review paper, we established connections between different data-driven model reduction and transfer operator approximation methods developed independently by the dynamical systems, fluid dynamics, machine learning, and molecular dynamics communities. Although the derivations of these methods differ, we have shown that the resulting algorithms share many similarities. 

DMD, TICA and MSMs are popular methods to approximate the dynamics of high-dimensional systems. Due to their simple basis functions, they conduct relatively rough approximations, but when only a few spectral components are required, the approximation error can be controlled by choosing sufficiently large lag times $\tau$ \cite{SarichNoeSchuette_MMS09_MSMerror}. The more general methods VAC and EDMD are better suited to obtain accurate approximations of eigenfunctions. However, to ensure such an accurate approximation, one would have to deploy multiple basis functions in all coordinates and their combinations, which is unfeasible for high-dimensional systems, and would also lead to overfitting when estimating the eigenfunctions of the Koopman operator from a finite data set \cite{MP15}.

A natural approach to mitigate these problems is to construct an iterative or ``deep'' approach in which the dynamical systems subspace in which a high resolution of basis functions is required is found by multiple successive analysis steps. A common approach is to first reduce the dimension by an inexpensive method such as TICA, in order to have a relatively low-dimensional space in which the eigenfunctions are approximated with a higher-resolution method. Another possibility is to exploit low-rank tensor approximations of transfer operators and their eigenfunctions. Tensor- and sparse-grid based reformulations of some of the methods described in this paper can be found in~\cite{NSVN15,KS16,KGPS16}, and in~\cite{JuKo09}, respectively. The efficiency of these tensor decomposition approaches depends strongly on the coupling structure; strong coupling between different variables typically leads to high ranks. Furthermore, some tensor formats also depend on the ordering of variables and a permutation of the variable's indices would lead to different tensor decompositions. Yet another approach might be to exploit sparsity-promoting methods using $L_{1}$-regularization techniques. Basis functions that are not required to represent the eigenfunctions of an operator can thus be eliminated and refined adaptively. Moreover, dictionary-learning methods could be applied to \emph{learn} a basis set and to adapt the dictionary to specific data \cite{MBPS09}. Future work includes evaluating and combining different dimensionality reduction, tensor decomposition, and sparsification methods to mitigate the curse of dimensionality.

\section*{Acknowledgements}

This research has been partially funded by Deutsche Forschungsgemeinschaft (DFG) through grant CRC 1114 \emph{``Scaling Cascades in Complex Systems''}, Project A04 \emph{``Efficient calculation of slow and stationary scales in molecular dynamics''} and Project B03 \emph{``Multilevel coarse graining of multi-scale problems''}, and by the Einstein Foundation Berlin (Einstein Center ECMath). Furthermore, we would like to thank the reviewers for their helpful comments and suggestions.

\bibliographystyle{unsrt}
\bibliography{TICA_vs_DMD,own}

\begin{thebibliography}{10}

\bibitem{PPGDN13}
G.~P{\'e}rez-Hern{\'a}ndez, F.~Paul, T.~Giorgino, G.~{De Fabritiis}, and
  F.~No{\'e}.
\newblock Identification of slow molecular order parameters for {M}arkov model
  construction.
\newblock {\em The Journal of Chemical Physics}, 139(1), 2013.

\bibitem{FGH14a}
G.~Froyland, G.~Gottwald, and A.~Hammerlindl.
\newblock A computational method to extract macroscopic variables and their
  dynamics in multiscale systems.
\newblock {\em SIAM Journal on Applied Dynamical Systems}, 13(4):1816--1846,
  2014.

\bibitem{MS94}
L.~Molgedey and H.~G. Schuster.
\newblock Separation of a mixture of independent signals using time delayed
  correlations.
\newblock {\em Physical Review Letters}, 72:3634--3637, 1994.

\bibitem{SP13}
C.~R. Schwantes and V.~S. Pande.
\newblock Improvements in {M}arkov {S}tate {M}odel construction reveal many
  non-native interactions in the folding of {NTL9}.
\newblock {\em Journal of Chemical Theory and Computation}, 9:2000--2009, 2013.

\bibitem{Schmid10}
P.~J. Schmid.
\newblock Dynamic mode decomposition of numerical and experimental data.
\newblock {\em Journal of Fluid Mechanics}, 656:5--28, 2010.

\bibitem{CTR12}
K.~K. Chen, J.~H. Tu, and C.~W. Rowley.
\newblock Variants of dynamic mode decomposition: Boundary condition,
  {K}oopman, and {F}ourier analyses.
\newblock {\em Journal of Nonlinear Science}, 22(6):887--915, 2012.

\bibitem{TRLBK14}
J.~H. Tu, C.~W. Rowley, D.~M. Luchtenburg, S.~L. Brunton, and J.~N. Kutz.
\newblock On dynamic mode decomposition: {T}heory and applications.
\newblock {\em Journal of Computational Dynamics}, 1(2), 2014.

\bibitem{KBBP16}
J.~N. Kutz, S.~L. Brunton, B.~W. Brunton, and J.~L. Proctor.
\newblock {\em Dynamic Mode Decomposition: Data-Driven Modeling of Complex
  Systems}.
\newblock SIAM, 2016.

\bibitem{Ko31}
B.~Koopman.
\newblock Hamiltonian systems and transformation in {H}ilbert space.
\newblock {\em Proceedings of the National Academy of Sciences of the United
  States of America}, 17(5):315, 1931.

\bibitem{Mezic05}
I.~Mezi{\'{c}}.
\newblock Spectral properties of dynamical systems, model reduction and
  decompositions.
\newblock {\em Nonlinear Dynamics}, 41(1):309--325, 2005.

\bibitem{RMBSH09}
C.~W. Rowley, I.~Mezi{\'c}, S.~Bagheri, P.~Schlatter, and D.~S. Henningson.
\newblock Spectral analysis of nonlinear flows.
\newblock {\em Journal of Fluid Mechanics}, 641:115--127, 2009.

\bibitem{BMM12}
M.~Budi{\v s}i{\'c}, R.~Mohr, and I.~Mezi{\'c}.
\newblock Applied {K}oopmanism.
\newblock {\em Chaos: An Interdisciplinary Journal of Nonlinear Science},
  22(4), 2012.

\bibitem{NoNu13}
F.~No{\'e} and F.~N{\"u}ske.
\newblock A variational approach to modeling slow processes in stochastic
  dynamical systems.
\newblock {\em Multiscale Modeling \& Simulation}, 11(2):635--655, 2013.

\bibitem{NKPMN14}
F.~N\"uske, B.~G. Keller, G.~P\'erez-Hern\'andez, A.~S. J.~S. Mey, and
  F.~No\'e.
\newblock Variational approach to molecular kinetics.
\newblock {\em Journal of Chemical Theory and Computation}, 10(4):1739--1752,
  2014.

\bibitem{NSVN15}
F.~N\"uske, R.~Schneider, F.~Vitalini, and F.~No\'e.
\newblock Variational tensor approach for approximating the rare-event kinetics
  of macromolecular systems.
\newblock {\em The Journal of Chemical Physics}, 144(5), 2016.

\bibitem{WKR15}
M.~O. Williams, I.~G. Kevrekidis, and C.~W. Rowley.
\newblock A data-driven approximation of the {K}oopman operator: Extending
  dynamic mode decomposition.
\newblock {\em Journal of Nonlinear Science}, 25(6):1307--1346, 2015.

\bibitem{WRK15}
M.~O. Williams, C.~W. Rowley, and I.~G. Kevrekidis.
\newblock A kernel-based method for data-driven {K}oopman spectral analysis.
\newblock {\em Journal of Computational Dynamics}, 2(2):247--265, 2015.

\bibitem{KKS16}
S.~Klus, P.~Koltai, and Ch. Sch{\"u}tte.
\newblock On the numerical approximation of the {P}erron--{F}robenius and
  {K}oopman operator.
\newblock {\em Journal of Computational Dynamics}, 3(1):51--79, 2016.

\bibitem{NLCK06}
B.~Nadler, S.~Lafon, R.~R. Coifman, and I.~G. Kevrekidis.
\newblock Diffusion maps, spectral clustering and reaction coordinates of
  dynamical systems.
\newblock {\em Applied and Computational Harmonic Analysis}, 21(1):113--127,
  2006.

\bibitem{CKLMN08}
R.~R. Coifman, I.~G. Kevrekidis, S.~Lafon, M.~Maggioni, and B.~Nadler.
\newblock Diffusion maps, reduction coordinates, and low dimensional
  representation of stochastic systems.
\newblock {\em Multiscale Modeling \& Simulation}, 7(2):842--864, 2008.

\bibitem{FPKD11}
A.~L. Ferguson, A.~Z. Panagiotopoulos, I.~G. Kevrekidis, and P.~G. Debenedetti.
\newblock Nonlinear dimensionality reduction in molecular simulation: The
  diffusion map approach.
\newblock {\em Chemical Physics Letters}, 509(1):1--11, 2011.

\bibitem{Gia15}
D.~Giannakis.
\newblock Data-driven spectral decomposition and forecasting of ergodic
  dynamical systems.
\newblock {\em ArXiv e-prints}, 2015.

\bibitem{Pav14}
G.~A. Pavliotis.
\newblock {\em Stochastic Processes and Applications: Diffusion Processes, the
  Fokker-Planck and Langevin Equations}, volume~60 of {\em Texts in Applied
  Mathematics}.
\newblock Springer, 2014.

\bibitem{Hopf54}
E.~Hopf.
\newblock The general temporally discrete {M}arkoff process.
\newblock {\em Journal of Rational Mechanics and Analysis}, 3(1):13--45, 1954.

\bibitem{Kre85}
U.~Krengel.
\newblock {\em Ergodic theorems}, volume~6 of {\em de {G}ruyter Studies in
  Mathematics}.
\newblock Walter de Gruyter \& Co., Berlin, 1985.

\bibitem{BaRo95}
J.~R. Baxter and J.~S. Rosenthal.
\newblock Rates of convergence for everywhere-positive {M}arkov chains.
\newblock {\em Statistics \& probability letters}, 22(4):333--338, 1995.

\bibitem{LaMa94}
A.~Lasota and M.~C. Mackey.
\newblock {\em Chaos, fractals, and noise: Stochastic aspects of dynamics},
  volume~97 of {\em Applied Mathematical Sciences}.
\newblock Springer, 2nd edition, 1994.

\bibitem{SFHD99}
Ch. Sch{\"u}tte, A.~Fischer, W.~Huisinga, and P.~Deuflhard.
\newblock A direct approach to conformational dynamics based on hybrid {M}onte
  {C}arlo.
\newblock {\em Journal of Computational Physics}, 151(1):146--168, 1999.

\bibitem{FrJuKo13}
G.~Froyland, O.~Junge, and P.~Koltai.
\newblock Estimating long term behavior of flows without trajectory
  integration: The infinitesimal generator approach.
\newblock {\em SIAM Journal on Numerical Analysis}, 51(1):223--247, 2013.

\bibitem{WNPKKN16}
H.~Wu, F.~N\"uske, F.~Paul, S.~Klus, P.~Koltai, and F.~No\'e.
\newblock Variational {K}oopman models: {S}low collective variables and
  molecular kinetics from short off-equilibrium simulations.
\newblock {\em The Journal of Chemical Physics}, 146(15):154104, 2017.

\bibitem{DJ99}
M.~Dellnitz and O.~Junge.
\newblock On the approximation of complicated dynamical behavior.
\newblock {\em SIAM Journal on Numerical Analysis}, 36(2):491--515, 1999.

\bibitem{SS13}
Ch. Sch\"utte and M.~Sarich.
\newblock {\em Metastability and Markov State Models in Molecular Dynamics:
  Modeling, Analysis, Algorithmic Approaches}.
\newblock Number~24 in Courant Lecture Notes. American Mathematical Society,
  2013.

\bibitem{Lei06}
B.~Leimkuhler, Ch. Chipot, R.~Elber, A.~Laaksonen, A.~Mark, T.~Schlick, Ch.
  Sch\"{u}tte, and R.~Skeel.
\newblock {\em New Algorithms for Macromolecular Simulation (Lecture Notes in
  Computational Science and Engineering)}.
\newblock Springer-Verlag New York, 2006.

\bibitem{MP15}
R.~T. McGibbon and V.~S. Pande.
\newblock Variational cross-validation of slow dynamical modes in molecular
  kinetics.
\newblock {\em The Journal of Chemical Physics}, 142(12), 2015.

\bibitem{HyvaerinenKarhunenOja_ICA_Book}
A.~Hyv{\"a}rinen, J.~Karhunen, and E.~Oja.
\newblock {\em Independent Component Analysis}.
\newblock John Wiley \& Sons, 2001.

\bibitem{ZieheMueller_ICANN98_TDSEP}
A.~Ziehe and K.-R. M\"{u}ller.
\newblock {TDSEP} {\textemdash} an efficient algorithm for blind separation
  using time structure.
\newblock In {\em {ICANN} 98}, pages 675--680. Springer Science and Business
  Media, 1998.

\bibitem{TSHL90}
L.~Tong, V.~C. Soon, Y.~F. Huang, and R.~Liu.
\newblock {AMUSE}: a new blind identification algorithm.
\newblock In {\em IEEE International Symposium on Circuits and Systems}, pages
  1784--1787, 1990.

\bibitem{JSN14}
M.~R. Jovanovi{\'c}, P.~J. Schmid, and J.~W. Nichols.
\newblock Sparsity-promoting dynamic mode decomposition.
\newblock {\em Physics of Fluids}, 26(2), 2014.

\bibitem{BPTK15}
S.~L. Brunton, J.~L. Proctor, J.~H. Tu, and J.~N. Kutz.
\newblock Compressed sensing and dynamic mode decomposition.
\newblock {\em Journal of Computational Dynamics}, 2(2):165--191, 2015.

\bibitem{KGPS16}
S.~Klus, P.~Gel{\ss}, S.~Peitz, and Ch. Sch{\"u}tte.
\newblock Tensor-based dynamic mode decomposition.
\newblock {\em ArXiv e-prints}, 2016.

\bibitem{Mezic13}
I.~Mezi{\'c}.
\newblock Analysis of fluid flows via spectral properties of the {K}oopman
  operator.
\newblock {\em Annual Review of Fluid Mechanics}, 45(1):357--378, 2013.

\bibitem{BJOK16}
B.~W. Brunton, L.~A. Johnson, J.~G. Ojemann, and J.~N. Kutz.
\newblock Extracting spatial-temporal coherent patterns in large-scale neural
  recordings using dynamic mode decomposition.
\newblock {\em Journal of Neuroscience Methods}, 258:1--15, 2016.

\bibitem{Ulam60}
S.~M. Ulam.
\newblock {\em A Collection of Mathematical Problems}.
\newblock Interscience Publisher NY, 1960.

\bibitem{Shadden05}
S.~C. Shadden, F.~Lekien, and J.~E. Marsden.
\newblock Definition and properties of {L}agrangian coherent structures from
  finite-time {L}yapunov exponents in two-dimensional aperiodic flows.
\newblock {\em Physica D: Nonlinear Phenomena}, 212(3):271--304, 2005.

\bibitem{FP09}
G.~Froyland and K.~Padberg.
\newblock Almost-invariant sets and invariant manifolds --- {C}onnecting
  probabilistic and geometric descriptions of coherent structures in flows.
\newblock {\em Physica D}, 238:1507--1523, 2009.

\bibitem{FP14}
G.~Froyland and K.~Padberg-Gehle.
\newblock Almost-invariant and finite-time coherent sets: Directionality,
  duration, and diffusion.
\newblock In W.~Bahsoun, Ch. Bose, and G.~Froyland, editors, {\em Ergodic
  Theory, Open Dynamics, and Coherent Structures}, pages 171--216. Springer New
  York, 2014.

\bibitem{Pazy83}
A.~Pazy.
\newblock {\em Semigroups of linear operators and applications to partial
  differential equations}.
\newblock Springer, 1983.

\bibitem{DSS12}
N.~Djurdjevac, M.~Sarich, and C.~Sch{\"u}tte.
\newblock {Estimating the Eigenvalue Error of Markov State Models}.
\newblock {\em Multiscale Modeling \& Simulation}, 10(1):61--81, 2012.

\bibitem{PWSKSHSCSF11}
J.-H. Prinz, H.~Wu, M.~Sarich, B.~Keller, M.~Senne, M.~Held, J.~D. Chodera,
  C.~Sch{\"u}tte, and F.~No{\'e}.
\newblock {Markov Models of Molecular Kinetics: Generation and Validation}.
\newblock {\em Journal of Chemical Physics}, 134:174105, 2011.

\bibitem{NC15}
F.~No{\'e} and C.~Clementi.
\newblock Kinetic distance and kinetic maps from molecular dynamics simulation.
\newblock {\em Journal of Chemical Theory and Computation}, 11(10):5002--5011,
  2015.

\bibitem{DW05}
P.~Deuflhard and M.~Weber.
\newblock {Robust Perron cluster analysis in conformation dynamics}.
\newblock {\em Linear Algebra and its Applications}, 398:161 -- 184, 2005.

\bibitem{RW13}
S.~R{\"o}blitz and M.~Weber.
\newblock {Fuzzy spectral clustering by PCCA+: application to Markov state
  models and data classification}.
\newblock {\em Advances in Data Analysis and Classification}, 7(2):147--179,
  2013.

\bibitem{NWPP13}
F.~No{\'e}, H.~Wu, J.-H. Prinz, and N.~Plattner.
\newblock {Projected and Hidden Markov Models for Calculating Kinetics and
  Metastable States of Complex Molecules}.
\newblock {\em The Journal of Chemical Physics}, 139:184114, 2013.

\bibitem{Oks03}
B.~{\O}ksendal.
\newblock {\em Stochastic Differential Equations}.
\newblock Springer, 6th edition, 2003.

\bibitem{KoMe17}
M.~Korda and I.~Mezi{\'c}.
\newblock On convergence of {E}xtended {D}ynamic {M}ode {D}ecomposition to the
  {K}oopman operator.
\newblock {\em arXiv preprint arXiv:1703.04680}, 2017.

\bibitem{Ban80}
C.~Bandle.
\newblock {\em {Isoperimetric inequalities and applications}}.
\newblock Monographs and studies in mathematics. Pitman, 1980.

\bibitem{NueskeEtAl_JCTC14_Variational}
F.~N{\"u}ske, B.~G. Keller, G.~P{\'e}rez-Hern{\'a}ndez, A.~S. J.~S. Mey, and
  F.~No{\'e}.
\newblock Variational approach to molecular kinetics.
\newblock {\em J. Chem. Theory Comput.}, 10:1739--1752, 2014.

\bibitem{SarichNoeSchuette_MMS09_MSMerror}
M.~Sarich, F.~No{\'e}, and C.~Sch{\"u}tte.
\newblock On the approximation quality of {M}arkov state models.
\newblock {\em Multiscale Model. Simul.}, 8:1154--1177, 2010.

\bibitem{KS16}
S.~Klus and Ch. Sch{\"u}tte.
\newblock Towards tensor-based methods for the numerical approximation of the
  {P}erron--{F}robenius and {K}oopman operator.
\newblock {\em Journal of Computational Dynamics}, 3(2), 2016.

\bibitem{JuKo09}
O.~Junge and P.~Koltai.
\newblock Discretization of the {F}robenius--{P}erron operator using a sparse
  {H}aar tensor basis: The {S}parse {U}lam method.
\newblock {\em SIAM Journal on Numerical Analysis}, 47:3464--3485, 2009.

\bibitem{MBPS09}
J.~Mairal, F.~Bach, J.~Ponce, and G.~Sapiro.
\newblock Online dictionary learning for sparse coding.
\newblock In {\em Proceedings of the 26th Annual International Conference on
  Machine Learning}, ICML '09, pages 689--696. ACM, 2009.

\end{thebibliography}

\end{document}